\newtheorem{lem}{Lemma}[section]
\newtheorem{thm}[lem]{Theorem}
\newtheorem{assumption}[lem]{Assumption}
\newtheorem{prop}[lem]{Proposition}
\theoremstyle{definition}
\theoremstyle{remark}
\newtheorem{rem}[lem]{Remark}
\numberwithin{equation}{section}
\newcommand{\ep}{\varepsilon}
\newcommand{\ue}{u^\ep}
\newcommand{\R}{\mathbb{R}}
\newcommand{\slope}{\widehat \alpha}
\newcommand{\EB}{e^{-\beta t/\ep }}
\title{Propagating interface in a monostable reaction-diffusion equation with time delay}
\date{}
\begin{document}
\maketitle \vspace{-15 mm}
\begin{center}
{\large\bf Matthieu Alfaro }\\[1ex]
I3M, Universit\'e de Montpellier 2,\\
CC051, Place Eug\`ene Bataillon, 34095 Montpellier Cedex 5, France,\\[2ex]
{\large\bf Arnaud Ducrot }\\[1ex]
IMB UMR CNRS 5251, Universit\'e de Bordeaux \\
3 ter, Place de la Victoire, 33000 Bordeaux France. \\[2ex]
\end{center}
\vspace{15pt}

%\tableofcontents

\begin{abstract}

We consider a monostable time-delayed reaction-diffusion equation
arising from population dynamics models. We let a small parameter
tend to zero and investigate the behavior of the solutions. We
construct accurate lower  barriers --- by using a non standard
bistable approximation of the monostable problem--- and upper
barriers. As a consequence, we prove the convergence to a
propagating interface.\\

\noindent{\underline{Key Words:}} time-delayed reaction-diffusion
equation, delay differential equation, travelling wave,
propagating interface.\footnote{AMS Subject Classifications:
35K57, 35R10, 92D25.

The authors are supported by the French Agence Nationale de la
Recherche within the project IDEE (ANR-2010-0112-01).}
\end{abstract}

\section{Introduction}\label{s:intro}

In this work we investigate the singular limit, as $\ep \to 0^+$,
of $\ue:[-\ep \tau,\infty) \times \R ^N\to \R$ the solution of the
delayed reaction-diffusion equation
\begin{equation}\label{eq}
\partial_t u(t,x)=\ep \Delta u(t,x) +\frac 1 \ep
\left[f(u(t-\ep \tau,x))- u(t,x)\right], \;\; t>0,\;x\in\R^N,
\end{equation}
supplemented with the initial data of delayed type
\begin{equation}\label{eq-ID}
u\left(\theta,x\right)=\varphi\left(\frac{\theta}{\ep},x\right),\;\;
-\ep\tau \leq\theta \leq 0,\; x\in \R^ N.
\end{equation}
Here $\tau>0$ is a given delay parameter; $f:[0,\infty)\to
[0,\infty)$ is a given increasing and monostable nonlinearity
--- see \eqref{function-f} for precise assumptions; the initial data
$\varphi:[-\tau,0]\times \R ^N \to \R$ is a given smooth function
--- see Assumption \ref{ASS-initial}.

Equation \eqref{eq} is widely used in population dynamics models.
In this context, $u(t,x)$ denotes the density of individuals at
time $t$ and spatial location $x$. The function $f$ is the birth
rate of the population. Note that the birth feedback appears with
some time delay in order to take into account the period of
maturation to become adult. Finally the term $-u$ corresponds to a
normalized death rate, while $\ep>0$ is a scaling parameter.

When $f$ takes the form of the so-called Ricker's function
\begin{equation}\label{f}
f(u)=\slope u e^{-u},\;\; \slope >1,
\end{equation}
equation \eqref{eq} is commonly referred as the {\it diffusive
Nicholson's blowflies equation}. This kind of equation has been
intensively studied in the literature. The purely reactive part,
namely the underlying delay differential equation, has attracted
the attention of many researchers during the past decades (see for
instance \cite{Ruan-2006} and references cited therein). On the
other hand, the diffusive equation has also been extensively
investigated from the spatial propagation point of view, that is
speed of spread, travelling wave solutions (we refer for instance
to So and Zou \cite{So-Zou}, So, Wu and Zou \cite{So-Wu-Zou},
Thieme and Zhao \cite{Thieme-Zhao}, Fang and Zhao
\cite{Fang-Zhao10}, and the references therein).

In this work, we consider the monostable equation \eqref{eq} in
the so-called monotonic regime. Precisely we assume that
$f:[0,\infty)\to [0,\infty)$ is a function of the class $C^2$ such
that
\begin{equation}\label{function-f}
\begin{cases}
f(0)=0,\;\; f(1)=1,\;\; f'(0)>1, \;\; f'(1)<1,\\
f'(u)>0, \;\;\forall u\in (0,1),\\
f(u)>u, \;\;\forall u\in (0,1).
\end{cases}
\end{equation}
In particular, $u\equiv 0$ and $u\equiv 1$ solve \eqref{eq}. If we
come back to example \eqref{f}, assuming $\slope\in (1,e)$ implies
that $f$ satisfies \eqref{function-f}, with $\ln \slope$ playing
the role of 1.

Let us observe that, when $\tau=0$, equation \eqref{eq} reduces to
the monostable reaction-diffusion equation
\begin{equation}\label{interface-monostable}
\partial_t u(t,x)=\ep\Delta u(t,x)+\frac{1}{\ep} F\left(u(t,x)\right),
\end{equation}
with $F(u):=f(u)-u$. In view of \eqref{function-f}, the
nonlinearity $F$ exhibits a monostable dynamics, namely
$F(0)=F(1)=0$, $F(u)>0$ for all $u\in (0,1)$, and $F'(0)>0$ while
$F'(1)<0$. Under these assumptions, solutions of
\eqref{interface-monostable} with compactly supported initial data
have been considered first by Freidlin \cite{Freidlin} with
probabilistic tools, then by Evans and Souganidis \cite{Evans}
with Hamilton-Jacobi techniques (we also refer to \cite{BES, BS}
and the references therein). This problem has been recently
revisited using comparison parabolic arguments in \cite{A-Duc}
(including the case of compactly supported initial data), and
\cite{A-Duc2} (for slowly decaying initial data). Roughly
speaking, for compactly supported initial data with convex and
bounded support, as $\ep\to 0$, the solution of
\eqref{interface-monostable} generates a sharp interface at the
very early stages of the dynamics. Then the interface propagates
through the spatial domain, according to a free boundary problem
with constant speed in the normal direction. This speed turns out
to be the minimal speed of propagation of some underlying
travelling wave solutions.

In the delayed case ($\tau >0$) that we consider, we will show
that the above scenario remains valid under the following
assumption on the initial data $\varphi$ arising in \eqref{eq-ID}.

\begin{assumption}\label{ASS-initial} We assume that $\varphi:[-\tau,0]\times \R^N\to [0,1]$ is a uniformly
continuous function satisfying the following.
\begin{itemize}
\item [(i)] There exists $w_0 \in BUC^2(\R^N,\R)$ such that
$$
\Omega_0:=\{x\in\R^N:\;w_0(x)>0\} $$
 is a nonempty smooth bounded
and convex domain, and
\begin{equation}\label{cond-ordre-1}
w_0(x)\leq \varphi(\theta,x), \;\;\forall (\theta,x)\in
[-\tau,0]\times \R^N.
\end{equation}
\item [(ii)] There exists $\delta>0$ such that
\begin{equation}\label{cond-decolage}
\left|\nabla w_0(x).\nu_{\partial\Omega_0}(x)\right|\geq
\delta,\;\;\forall x\in\Gamma _0:=\partial\Omega_0,
\end{equation}
wherein $\nu_{\partial\Omega_0}(x)$ denotes the outward unit
normal vector to $\Omega_0$ at $x\in\Gamma _0$. \item [(iii)]
There exists $v_0\in BUC(\R^N,[0,1))$ such that
\begin{equation}\label{cond-support}
{\rm supp}\; v_0=\overline{\Omega_0}\,,
\end{equation}
and
\begin{equation}\label{cond-ordre-2}
\varphi(\theta,x)\leq v_0(x), \;\;\forall (\theta,x)\in
[-\tau,0]\times \R^N.
\end{equation}
\end{itemize}
\end{assumption}

\begin{rem}\label{UN} The hypothesis $\Vert v_0 \Vert _\infty
<1$ in $(iii)$ shall be used in the construction of upper barriers
in Section \ref{s:upper-barriers}. Nevertheless, when $\Vert v_0
\Vert _\infty =1$, our main result remains valid under the
additional assumption that $f$ satisfies
\begin{equation}\label{hyp-suppl}
f(K_0 u)\leq K_0 f(u),\;\;\forall u\in [0,1],
\end{equation}
for some constant $K_0>1$. See Remark \ref{UN-bis} for details.
\end{rem}

Before stating our main convergence result let us give some
notations. Under assumption \eqref{function-f}, we denote by
$c^*>0$ the minimal speed of the underlying delayed travelling
waves (see Lemma \ref{LE-monostable} for details). In particular,
there is $\left(U^*,c^*\right)\in C^2(\R)\times (0,\infty)$ such
that $U^*$ is nonincreasing and
\begin{equation*}
\begin{cases}
(U^*)''(z)+c^*(U^*)'(z)+f\left(U^*(z+c^*\tau)\right)-U^*(z)=0,\;\;\forall z\in\R,\\
U^*(-\infty)=1\text{ and }U^*(\infty)=0.
\end{cases}
\end{equation*}
Next, for $c>0$, we denote by $\Gamma ^c:=\bigcup _{t\geq 0}
(\{t\} \times \Gamma^c_t)$ the smooth solution of the free
boundary problem (see subsection \ref{ss:distance} for details)
\[
 (P^c)\quad\begin{cases}
 \, V=c
 \quad \text { on } \Gamma ^c _t \vspace{3pt}\\
 \, \Gamma ^c _t\big|_{t=0}=\Gamma_0,
\end{cases}
\]
with $V$ the normal velocity of $\Gamma ^c_t$ in the exterior
direction, and $\Gamma _0$ the initial interface defined in
\eqref{cond-decolage}. Also, we denote by $\Omega ^c_t$ the region
enclosed by the hypersurface $\Gamma ^c_t$.

Here is the main result of the present paper (see subsection
\ref{ss:well-posedness} for the well-posedness of the initial
value problem \eqref{eq}--\eqref{eq-ID}).

\begin{thm}[Convergence to a propagating interface]\label{THEO-conv}
Let the nonlinearity $f$ be as in \eqref{function-f}. Let the
initial data $\varphi$ satisfy Assumption \ref{ASS-initial}. For
each $\ep>0$, let $u^\ep:[-\ep\tau,\infty)\times \R ^N \to \R$ be
the solution of \eqref{eq}--\eqref{eq-ID}. Then the following
convergence results hold.
\begin{itemize}
\item [(i)] For each $c\in \left(0,c^*\right)$ and each $t_0>0$,
we have
\begin{equation*}
\lim_{\ep\to 0^+} \sup_{t\geq t_0}\sup_{\;x\in
\overline{\Omega_t^c}}\left|1-u^\ep(t,x)\right|=0.
\end{equation*}
\item [(ii)] For each $c>c^*$ and each $t_0>0$, we have
\begin{equation*}
\lim_{\ep\to 0^+} \sup_{t\geq t_0}\sup_{\;x\in \R^N\setminus
\Omega_t^c} u^\ep(t,x)=0.
\end{equation*}
\end{itemize}
\end{thm}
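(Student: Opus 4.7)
The plan is to prove both parts by parabolic comparison, applied to explicit sub- and super-solutions of \eqref{eq} built from one-dimensional travelling-wave profiles. The comparison principle is available because $f$ is nondecreasing on $[0,1]$. Throughout, I would use the signed distance $d^c(t,x)$ to $\Gamma^c_t$ (negative inside $\Omega^c_t$), smooth in a tubular neighbourhood and satisfying $\partial_t d^c=-c$ and $|\nabla d^c|=1$ there (globally extended in a smooth, bounded way outside). The key algebraic identity making travelling-wave barriers work under the delay is
$$d^c(t-\ep\tau,x)\approx d^c(t,x)+c\ep\tau,$$
so that the delay $\ep\tau$ in $u^\ep(t-\ep\tau,x)$ translates, along the front, into the profile-argument shift $c\tau$ built into $U''+cU'+f(U(\cdot+c\tau))-U=0$.

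\textbf{Upper barrier (part (ii)).} Fix $c>c^*$, pick an intermediate $c'\in(c^*,c)$, and let $(U_{c'},c')$ be the monotone monostable travelling wave provided by Lemma \ref{LE-monostable}. I would propose
$$\bar u^\ep(t,x):=U_{c'}\!\left(\frac{d^{c'}(t,x)-\mu(\ep)}{\ep}\right),$$
with a shift $\mu(\ep)\to 0^+$. Using the profile equation together with $\partial_t d^{c'}=-c'$ and $|\nabla d^{c'}|=1$, the residual reduces to the curvature term $-U_{c'}'(\cdot)\Delta d^{c'}$ plus $O(\ep)$ delay errors, both dominated for small $\ep$ by the leading $O(1/\ep)$ wave mechanism. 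The shift $\mu(\ep)$ is calibrated so that $\bar u^\ep(\theta,x)\ge\varphi(\theta/\ep,x)$ on $[-\ep\tau,0]\times\R^N$; this is possible by Assumption \ref{ASS-initial}(iii), since $\|v_0\|_\infty<1$ and $\mathrm{supp}\,v_0=\overline{\Omega_0}$ leave room for $U_{c'}$ above $v_0$. Comparison then gives $u^\ep\le\bar u^\ep$, and for $t\ge t_0>0$ with $x\notin\Omega^c_t$ one has $d^{c'}(t,x)\ge(c-c')t_0>0$, whence $\bar u^\ep(t,x)\to 0$ uniformly.

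\textbf{Lower barrier (part (i)) via bistable approximation.} For $c\in(0,c^*)$, the minimal-speed monostable wave $U^*$ is inconvenient, as its slow decay at $+\infty$ prevents exponential absorption of curvature errors and its speed $c^*$ leaves no reserve. Following the abstract's strategy, I would instead construct a family of $C^2$ bistable nonlinearities $f_\eta\le f$ on $[0,1]$ whose associated delayed bistable travelling wave $(U_\eta,c_\eta)$ is monotone, exponentially localized, and satisfies $c_\eta\to c^*$ as $\eta\to 0^+$. For $\eta$ small enough that $c_\eta\in(c,c^*)$, I would set
$$\underline u^\ep(t,x):=U_\eta\!\left(\frac{d^{c_\eta}(t,x)+\sigma(\ep)}{\ep}\right)-\sigma(\ep),$$
with $\sigma(\ep)\to 0^+$ absorbing the curvature and lower-order errors through the standard exponential-stability mechanism of bistable fronts; since $f_\eta\le f$, any such sub-solution is a sub-solution of \eqref{eq}. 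Comparison then yields $u^\ep\ge\underline u^\ep$, and for $t\ge t_0>0$ with $x\in\overline{\Omega^c_t}$ one has $d^{c_\eta}(t,x)\le -(c_\eta-c)t_0<0$, hence $\underline u^\ep(t,x)\to 1$ uniformly.

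\textbf{Main obstacle.} The most delicate step is matching the lower barrier to the delayed initial data on the whole strip $[-\ep\tau,0]\times\R^N$: since $w_0$ is merely bounded away from zero inside $\Omega_0$ rather than close to $1$, the $\ep$-scaled bistable profile cannot lie below $w_0$ throughout the interior. The likely remedy is a two-stage analysis, namely a short-time ``generation of interface'' step showing, via the driving ODE $\dot u=f(u)-u$ and the positive initial height $w_0$, that $u^\ep$ becomes close to $1$ on any compact subset of $\Omega_0$ after a time $T_\ep=o(1)$, followed by a propagation step initialised at $T_\ep$. The transversality condition \eqref{cond-decolage} enters the matching by providing the quadratic room near $\Gamma_0$ needed to insert the $\ep$-thin transition layer of $U_\eta$. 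A secondary technical ingredient is the speed convergence $c_\eta\to c^*$ for the delayed bistable travelling-wave problem $U_\eta''+c_\eta U_\eta'+f_\eta(U_\eta(\cdot+c_\eta\tau))-U_\eta=0$, which requires a careful ODE-monotonicity analysis that is not entirely standard because of the nonlocal advanced argument in $f_\eta$.
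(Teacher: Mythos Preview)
Your strategy for part~(i) is essentially the paper's: a bistable approximation giving waves $(U_\eta,c_\eta)$ with $c_\eta\nearrow c^*$ (this is Lemma~\ref{LE-approx-speed}), preceded by a generation-of-interface step at time $O(\ep|\ln\ep|)$ driven by the delay ODE (Section~\ref{s:generation}). Two details differ. First, the paper does not take $f_\eta\le f$ on $[0,1]$ but rather $f_\eta\equiv f$ on $[0,1]$, creating bistability by \emph{extending} $f$ to $[-\eta,0]$ with a new stable fixed point at $-\eta$; the bistable wave then ranges in $[-\eta,1]$, and since $u^\ep\in[0,1]$ solves the $f_\eta$-equation exactly, comparison is performed for the modified operator $\mathcal L^\ep_\eta$. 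Second, a constant shift $\sigma(\ep)$ is not enough: the curvature residual $-U_\eta'(z)\Delta d$ is $O(1)$, so the paper's sub-solution \eqref{sub} carries time-dependent corrections $p(t),q(t)$, with the term $\ep|\ln\ep|p'(t)U_\eta'(z)$ absorbing errors near the front and the exponential decay of $U_\eta',U_\eta''$ (Lemma~\ref{LE-bistable}(ii)) handling the far field.

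For part~(ii) your approach differs from the paper's and your justification contains a gap. The paper avoids the curved distance entirely: for each $x_0\in\Gamma_0$ with outward normal $n_0$, the planar wave $U^*\!\big(((x-x_0)\cdot n_0-c^*t)/\ep\big)$ is an \emph{exact} solution of \eqref{eq} (no curvature, no error terms); convexity of $\Omega_0$ together with $\|v_0\|_\infty<1$ and $\mathrm{supp}\,v_0=\overline{\Omega_0}$ yields the initial ordering, and taking the envelope over $x_0$ recovers $d(0,x)$ (Proposition~\ref{PROP-above}). Your curved-front ansatz could be made to work, but not for the reason you give: once the profile equation is used, there is \emph{no} ``leading $O(1/\ep)$ wave mechanism'' left over---the $O(1/\ep)$ terms cancel exactly, and the residual is precisely $-U_{c'}'(z)\Delta d^{c'}$, which is $O(1)$ and does not vanish as $\ep\to 0$. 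Near the interface it has the right sign by convexity ($\Delta d\ge 0$, $U_{c'}'<0$), but you do not invoke this; far from the interface the cut-off of $d^{c'}$ destroys $\partial_t d=-c'$ and $|\nabla d|=1$, producing further $O(1)$ errors that require the exponential decay of $U_{c'}'$, $U_{c'}''$ and a correction scheme as in Proposition~\ref{lem:sous-sol-motion}. The paper's planar construction sidesteps all of this.
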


A first step towards Theorem \ref{THEO-conv} consists in proving
that, as $\ep \to 0$, the initial value problem
\eqref{eq}--\eqref{eq-ID} generates a sharp interface after a very
small time of order $O\left(\ep|\ln\ep|\right)$. Then, to analyze
the propagation of the interface, we aim at constructing suitable
sub- and super-solutions. This step is strongly related to the
existence of travelling wave solutions. While the upper barriers
are directly constructed by using monostable travelling fronts,
the construction of lower barriers is much more delicate. This
kind of problem has been solved in several situations. In
\cite{HKLM08}, the authors consider a {\it degenerate}
reaction-diffusion equation, and take advantage of the existence
of {\it sharp} travelling fronts to construct sub-solutions. In
\cite{A-Duc}, the standard Fisher-KPP case is considered. The
construction of lower barriers of propagation is performed by
using the existence of non-monotone (and also not everywhere
positive) travelling waves with speeds $c<c^*$. In the non delayed
case, the existence of such a connection easily follows from a
phase plane analysis. In the delayed case we consider, the
existence of similar waves is far from obvious. The key idea of
the present paper is to construct sub-solutions of propagation by
using travelling waves for a modified time delayed
reaction-diffusion equation with a bistable dynamics. We hope that
such a strategy could be used to understand better the classical
non delayed Fisher-KPP case and also to analyze a larger class of
equations.

The organization of the present paper is as follows. In Section
\ref{s:preli}, we recall known facts on the well-posedness of the
initial value problem \eqref{eq}--\eqref{eq-ID}. We also discuss
the links between monostable  travelling waves associated with
$f$, and bistable ones associated with approximations $f_\eta$ of
$f$. This is necessary to develop the key strategy mentioned
above. In Section \ref{s:generation}, we investigate the
generation of a sharp interface in the very early stages of the
dynamics. This is strongly related with the underlying delay
differential equation. Section \ref{s:motion} is concerned with
the study of the propagation of interface from below. We shall
construct accurate lower barriers by using a bistable
approximation. As a result of Sections \ref{s:generation} and
\ref{s:motion}, we shall prove Theorem \ref{THEO-conv} $(i)$.
Section \ref{s:upper-barriers} deals with the construction of
upper barriers to control the propagation from above. This will
imply Theorem \ref{THEO-conv} $(ii)$.

\section{Preliminary}\label{s:preli}

\subsection{Existence and comparison for \eqref{eq}--\eqref{eq-ID}}\label{ss:well-posedness}

We first state the following comparison principle for monotone
delayed reaction-diffusion equations.

\begin{prop}[Comparison principle]\label{LE-comparaison}
Let $\tau>0$, $T>0$ and $g:\R\to \R$ an increasing and continuous
function be given. Let $\left(u,v\right)\in C\left([-\tau,T]\times
\R^N\right)$ be two bounded functions satisfying
\begin{equation*}
\partial_t u,\;\partial _t v,\;\nabla u,\;\nabla v,\;D^2u,\;D^2v\;\in
L^2_{loc}\left((0,T)\times\R^N\right).
\end{equation*}
Assume
\begin{equation}\label{sub-super}
\begin{split}
&\left(\partial_t -\Delta+1\right)u(t,x)-g\left(u(t-\tau,x)\right)\leq 0\\
&\left(\partial_t
-\Delta+1\right)v(t,x)-g\left(v(t-\tau,x)\right)\geq 0,
\end{split}
\end{equation}
for almost every $(t,x)\in (0,T)\times\R^N$, and
\begin{equation}\label{ordre-initial}
u(\theta,x)\leq v(\theta,x)\quad\text{ for all } (\theta,x)\in
[-\tau,0]\times \R^N. \end{equation} Then  $u(t,x)\leq v(t,x)$,
for all $(t,x)\in [-\tau,T]\times \R^N$.
\end{prop}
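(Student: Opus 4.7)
The plan is to reduce the delayed comparison principle to an iterated application of the standard (non-delayed) parabolic comparison principle on time slabs of length $\tau$, leveraging the monotonicity of $g$ to turn the delayed term into a known ordered source. Set $w:=v-u$; the task is to show $w\geq 0$ on $[-\tau,T]\times\R^N$.

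On the first slab $[0,\tau]$, the delayed arguments $t-\tau$ lie in $[-\tau,0]$, where by hypothesis \eqref{ordre-initial} we have $u(t-\tau,x)\leq v(t-\tau,x)$. Monotonicity of $g$ therefore yields $g(v(t-\tau,x))-g(u(t-\tau,x))\geq 0$. Subtracting the two inequalities in \eqref{sub-super} gives, for a.e.\ $(t,x)\in (0,\tau)\times\R^N$,
\begin{equation*}
\left(\partial_t-\Delta+1\right)w(t,x)\geq g\bigl(v(t-\tau,x)\bigr)-g\bigl(u(t-\tau,x)\bigr)\geq 0,
\end{equation*}
with $w(0,\cdot)\geq 0$ on $\R^N$. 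This is a standard parabolic differential inequality (no delay), with $w$ bounded and continuous on $[0,\tau]\times\R^N$, satisfied in the almost-everywhere/$L^2_{loc}$ sense by the regularity assumption. A classical maximum principle for bounded weak solutions on the whole space $\R^N$ (obtained, e.g., by Stampacchia truncation applied to $w_-:=\max(-w,0)$ with a Gaussian weight $e^{-\mu|x|^2}$ to compensate the loss of compactness) yields $w\geq 0$ on $[0,\tau]\times\R^N$.

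Once ordering is established on $[0,\tau]$, we iterate. On $[\tau,2\tau]$ the delayed arguments now lie in $[0,\tau]$, where ordering has just been proved; monotonicity of $g$ again produces a non-negative source, so the same Gaussian-weighted weak maximum principle on this slab delivers $w\geq 0$ on $[\tau,2\tau]\times\R^N$. Repeating this $\lceil T/\tau\rceil$ times covers the full interval $[0,T]$ and concludes the proof.

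The only genuine obstacle is the last point: justifying the scalar parabolic maximum principle for bounded functions of class $L^2_{loc}$ on the unbounded domain $\R^N$. A classical argument multiplies the equation satisfied by $w_-$ by $e^{-\mu|x|^2}$ and integrates in space and time; the drift produced by differentiating the weight is absorbed by the $+1$ zeroth-order term provided $\mu$ is small enough, yielding $\int_{\R^N} w_-^2(t,x)\,e^{-\mu|x|^2}\,dx\leq e^{Ct}\int_{\R^N} w_-^2(0,x)\,e^{-\mu|x|^2}\,dx=0$, hence $w\geq 0$ on the slab. Everything else is a routine bookkeeping of the delay iteration.
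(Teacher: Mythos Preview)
Your proof is correct and follows essentially the same approach as the paper: reduce to a non-delayed inequality on slabs of length $\tau$ by using the monotonicity of $g$, apply the weak parabolic comparison principle on $\R^N$, and iterate. The only difference is cosmetic: the paper defines $w=u-v$ and simply cites a reference for the weak comparison principle on the whole space, whereas you sketch the Gaussian-weighted energy argument explicitly.
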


\begin{proof}
Let us consider the map $w:=u-v\in C\left([-\tau,T]\times
\R^N\right)$. Since $g$ is increasing, it follows from
\eqref{sub-super} and \eqref{ordre-initial} that $w$ satisfies
\begin{equation*}
\left(\partial_t-\Delta+1\right) w(t,x)\leq 0\;\;\; a.e. \text{ in
$\left(0,\min (T,\tau)\right)\times\R^N$}.
\end{equation*}
Since $w(0,\cdot)\leq 0$, the weak comparison principle
\cite[Proposition 52.8]{Souplet} implies $w\leq 0$ in
$\left(0,\min (T,\tau)\right)\times\R^N$. If $T>\tau$, one can
repeat the argument on $\left(\tau,\min
(T,2\tau)\right)\times\R^N$. This proves the proposition.
\end{proof}

We now introduce some notations. Let $X:={\rm
BUC}\left(\R^N,\R\right)$ be the Banach space of bounded and
uniformly continuous functions from $\R^N$ to $\R$, endowed with
the usual supremum norm. Define also the Banach spaces $\mathcal
C:=C\left([-\tau,0],X\right)$ and $\mathcal
C_0:=C\left([-\tau,0],\R\right)$. For convenience, we identify
$\psi \in \mathcal C$ as a function from $ [-\tau,0]\times \R ^N$
into $\R$ defined by $\psi(\theta,x)=\psi(\theta)(x)$. For each
$\alpha<\beta$, we define
\begin{equation*}
[\alpha,\beta]_{\mathcal C}:=\left\{\psi\in \mathcal
C:\;\;\alpha\leq \psi(\theta,x)\leq \beta,\;\forall (\theta,x)\in
[-\tau,0]\times\R^N\right\},
\end{equation*}
and $[\alpha,\beta]_{\mathcal C_0}:=\mathcal C_0\cap
[\alpha,\beta]_{\mathcal C}$. Next, for any continuous function
$w:[-\tau,\infty)\times \R^N\to \R$, we define $w_t \in \mathcal
C$, $t\geq 0$, by
$$
w_t:(\theta,x) \in [-\tau,0]\times \R ^N\mapsto
w_t(\theta,x)=w(t+\theta,x).
$$

The well-posedness of the initial value problem
\eqref{eq}--\eqref{eq-ID} can classically be investigated via the
theory of abstract functional differential equations: since the
initial data $\varphi \in [0,1]_{\mathcal C}$, the initial value
problem \eqref{eq}--\eqref{eq-ID} admits a unique {\it mild}
solution $u^\ep:[0,\infty)\times \R ^N \to [0,1]$, which is
actually classical on $[\ep \tau, \infty)\times \R ^N$. For more
details, we refer the reader to the monograph of Wu \cite{Wu-book}
and the references cited therein.

\subsection{Monostable and bistable delayed travelling
waves}\label{ss:approx}

As explained in the introduction, the construction of lower
barriers is far from obvious when $\tau >0$. A key idea of the
present paper is to derive the {\it monostable propagation of the
interface from below} from the bistable case. To perform this in
Section \ref{s:motion}, let us first define a family of bistable
approximations by extending the monostable nonlinearity $f$ for
negative values of $u$.

\vskip 5pt

 \noindent{\bf Bistable approximations of $f$.} For
$\eta\in (0,1]$, we introduce an increasing and bounded map
$f_\eta:\R\to \R$ of the class $C^2$ such that
\begin{equation}\label{function-f-modif}
\begin{split}
&f_\eta(u)=f(u)\;\; \forall u\in [0,1]\\
&f_\eta(-\eta)=-\eta\;\; \text{ and } {f_\eta}'(-\eta)<1\\
&f_\eta(u)<u\;\; \forall u\in (-\eta,0)\cup (1,\infty)\;\; \text{
and } f_\eta(u)>u\;\; \forall u\in (-\infty,-\eta)\cup (0,1).
\end{split}
\end{equation}
Observe that $f_\eta$ has exactly three fixed points $-\eta<0<1$,
${f_\eta} '(-\eta)<1 $ and ${f_\eta} '(1)=f'(1)<1$. We also
require that the family $\left\{f_\eta\right\}_{\eta\in (0,1]}$ is
ordered in the sense that:
\begin{equation}\label{function-f-order}
\forall \left(\eta,\eta'\right)\in (0,1]^2,\;\;
\eta<\eta'\;\Rightarrow\;f_{\eta'}(u)\leq f_\eta(u)\;\; \forall
u\in \R.
\end{equation}

\vskip 5pt

\noindent{\bf Travelling waves.} We consider the one dimensional
reaction-diffusion equation with time delay
\begin{equation}\label{eq-1D}
\left(\partial_t-\partial_{xx}+1\right)u(t,x)=f_\eta\left(u(t-\tau,x)\right),\;
t>0,\;x\in\R.
\end{equation}
We denote by $u_\eta\equiv
u_\eta(t,x;\psi):[-\tau,\infty)\times\R^N\to [-\eta,1]$ the
solution of \eqref{eq-1D} with the initial condition
\begin{equation}\label{eq-1D-ID}
u_0(\theta)(x)=u(\theta,x)=\psi\in [-\eta,1]_{\mathcal C}.
\end{equation}

Let us notice that the above initial value problem generates a
strongly continuous and increasing semiflow
$\left\{Q_\eta(t)\right\}_{t\geq 0}$ defined by
\begin{equation*}
\left[Q_\eta(t)\psi\right](\theta,x)=\left(u_\eta\right)_t\left(\theta,x;\psi\right),\;
(\theta,x)\in[-\tau,0]\times \R ^N,
\end{equation*}
and acting $[-\eta,1]_{\mathcal C}$ into itself. Also, it follows
from \eqref{function-f-modif} that, for each $t\geq 0$,
 $Q_\eta(t)[0,1]_{\mathcal C}\subset [0,1]_{\mathcal
C}$ and that $Q(t):=\left(Q_\eta(t)\right)|_{[0,1]_{\mathcal C}}$
does not depend upon $\eta$. Note that $Q_\eta$ exhibits a
bistable dynamics while $Q$ is of monostable type.

Let us state some basic facts on travelling waves sustained by
\eqref{eq-1D}.

\begin{lem}[Bistable Travelling waves]\label{LE-bistable} For
$\eta\in(0,1]$ arbitrary, the following holds.
\begin{itemize}
\item [(i)] There exists a unique speed $c_\eta$ such that
\eqref{eq-1D} has a travelling wave solution $(U_\eta,c_\eta)\in
C^2(\R)\times \R$ whose profile $U_\eta$ is nonincreasing, that is
\begin{equation}\label{eq-TW-bi}
\begin{cases}
{U_\eta}''(z)+c_\eta {U_\eta}'(z)+f_\eta\left(U_\eta(z+c_\eta\tau)\right)-U_\eta(z)=0,\;\;\forall z\in\R,\vspace{2pt}\\
U_\eta(-\infty)=1\; \text{ and }\, U_\eta(\infty)=-\eta.
\end{cases}
\end{equation}
\item [(ii)] There exist two constants $\left(\mu,M\right)\in
(0,\infty)^2$ such that
\begin{equation*}
\begin{cases}
\left|1-U_\eta(z)\right|+\left|-\eta-U_\eta(-z)\right|\leq Me^{\mu z},\;\forall z\leq 0,\vspace{2pt}\\
|{U_\eta}'(z)|+|{U_\eta}''(z)|\leq M e^{-\mu |z|},\;\;\forall
z\in\R.
\end{cases}
\end{equation*}
\item[(iii)] There exists some constant $\gamma>0$ such that, for
any $\psi\in [-\eta,1]_{\mathcal C}$ with
\begin{equation}\label{condition}
\liminf_{x\to -\infty}\min_{\theta\in [-\tau,0]}\psi(\theta,x)>0\;
\text{ and }\, \limsup_{x\to +\infty}\max_{\theta\in
[-\tau,0]}\psi(\theta,x)<0,
\end{equation}
one can find $K=K(\psi)>0$ and $\xi=\xi(\psi)\in\R$ such that
\begin{equation*}
\left|u_\eta(t,x;\psi)-U_\eta(x-c_\eta t+\xi)\right|\leq
Ke^{-\gamma t},\;\;\forall (t,x)\in [0,\infty)\times \R.
\end{equation*}
\end{itemize}
\end{lem}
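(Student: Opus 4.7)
The plan is to identify each assertion with a standard result in the theory of monotone delayed reaction--diffusion equations with bistable structure. Condition \eqref{function-f-modif} makes $f_\eta-\mathrm{id}$ vanish exactly at $-\eta<0<1$, with $f_\eta'(-\eta)<1$ and $f_\eta'(1)<1$, so that $\pm 1$, more precisely $-\eta$ and $1$, are linearly stable equilibria of $\dot u=f_\eta(u)-u$ while $0$ is unstable. This is the canonical bistable setup to which I would apply the abstract monotone semiflow framework of Schaaf, Smith--Zhao, and Fang--Zhao.

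For \emph{(i)}, I would observe that the semiflow $\{Q_\eta(t)\}_{t\geq 0}$ generated by \eqref{eq-1D}--\eqref{eq-1D-ID} is order-preserving (because $f_\eta$ is monotone), translation invariant in $x$, and compact in a suitable weighted topology. The abstract bistable wave existence theorem then produces a unique speed $c_\eta$ and a monotone profile $U_\eta$ satisfying \eqref{eq-TW-bi}, unique up to translation. The $C^2$ regularity follows by bootstrapping from the travelling wave equation since $f_\eta\in C^2$.

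For \emph{(ii)}, I would linearize \eqref{eq-TW-bi} at both endpoints. Setting $U_\eta=1-V$ near $z=-\infty$ produces
\begin{equation*}
V''(z)+c_\eta V'(z)-V(z)+f'(1)V(z+c_\eta\tau)=\mathcal{O}(V^2),
\end{equation*}
whose characteristic function $\Phi(\lambda):=\lambda^2+c_\eta\lambda-1+f'(1)e^{\lambda c_\eta\tau}$ satisfies $\Phi(0)=f'(1)-1<0$ and $\Phi(+\infty)=+\infty$, hence admits a positive root $\mu_1>0$. Comparing $V$ with the explicit super-solution $Ae^{\mu_1 z}$ (or invoking the stable manifold of the linearized retarded ODE) yields $|1-U_\eta(z)|\leq Me^{\mu_1 z}$ for $z\leq 0$. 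An analogous analysis at $z=+\infty$, using $f_\eta'(-\eta)<1$, furnishes the decay of $|-\eta-U_\eta(z)|$. The derivative bounds are then obtained by differentiating \eqref{eq-TW-bi} once and re-applying the same characteristic argument, or by interior Schauder estimates on translates of $U_\eta$.

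For \emph{(iii)}, I would follow the squeezing scheme of Fife--McLeod, as extended by Chen and then Smith--Zhao to monotone delayed semiflows. The plan is: first, hypothesis \eqref{condition} allows one to bracket $u_\eta(T_0,\cdot\,;\psi)$, for some $T_0>0$, between two translates $U_\eta(\cdot-\xi_-)-\delta$ and $U_\eta(\cdot-\xi_+)+\delta$ of the wave. Second, using the spectral gap $\max\bigl(f_\eta'(1),f_\eta'(-\eta)\bigr)<1$, one builds sub- and super-solutions of the form $U_\eta(x-c_\eta t+\xi_{\pm}(t))\pm q_0 e^{-\gamma t}$ with $\xi_{\pm}(t)\to\xi_\infty$ exponentially, where the common rate $\gamma>0$ is dictated by the two linearizations. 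Third, the comparison principle (Proposition \ref{LE-comparaison}) propagates this squeezing for all $t\geq T_0$ and delivers the announced exponential convergence. The main obstacle is precisely this third part: constructing admissible sub-/super-solutions in the presence of the delay and verifying that the correction term indeed contracts exponentially requires a careful inequality involving $f_\eta$ in a neighbourhood of the two stable equilibria, and this is where most of the delayed bistable stability literature concentrates its effort.
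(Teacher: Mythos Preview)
Your proposal is correct and aligns with the paper's approach: the paper's own proof consists entirely of citations, attributing (i) to Schaaf \cite[Theorem 3.13]{Sch-delay} (and Fang--Zhao \cite{Fang-Zhao11}), (ii) to Hupkes--Lunel \cite[Proposition 2.2.5]{Hup-Lun}, and (iii) to Smith--Zhao \cite[Theorem 3.3]{Smith-Zhao}. You have identified the same literature and, going further than the paper, sketched the underlying arguments (linearized characteristic analysis for the tails, Fife--McLeod squeezing for stability); this is a faithful expansion of what the cited references actually do.
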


\begin{proof} Part $(i)$ comes from Schaaf \cite[Theorem 3.13]{Sch-delay}
(see also Fang and Zhao \cite[Theorem 6.4]{Fang-Zhao11}). The
behavior of the profile $(ii)$ can be found in Hupkes and Lunel
\cite[Proposition 2.2.5]{Hup-Lun}. Finally the {\it global
asymptotic stability with phase shift of the wave} $(iii)$ is due
to Smith and Zhao \cite[Theorem 3.3]{Smith-Zhao}.
\end{proof}

We recall that $f$ satisfies  \eqref{function-f}. As far as
monostable travelling waves sustained by
\begin{equation}\label{eq-1D-monostable}
\left(\partial_t-\partial_{xx}+1\right)u(t,x)=f\left(u(t-\tau,x)\right),\;
t>0,\;x\in\R,
\end{equation}
are concerned, we quote the following result from Schaaf
\cite[Theorem 2.7]{Sch-delay} (see also \cite{Liang-Zhao}).

\begin{lem}[Monostable travelling waves]\label{LE-monostable}
 There exists $c^* >
0$ such that \eqref{eq-1D-monostable} has a travelling wave
solution $(U_c,c)\in C^2(\R)\times (0,\infty)$ with $0\leq U _c
\leq 1$, if and only if $c\geq c^*$. In addition, when $c\geq c^*$
the waves are nonincreasing.
\end{lem}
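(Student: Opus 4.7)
The plan is to seek travelling wave solutions $u(t,x)=U(x-ct)$ of \eqref{eq-1D-monostable}, which reduces the problem to a scalar functional differential equation for the profile:
\[
U''(z)+cU'(z)-U(z)+f(U(z+c\tau))=0,\qquad U(-\infty)=1,\quad U(+\infty)=0.
\]
The first step is a linear analysis at the unstable equilibrium $U=0$. Plugging $U(z)=e^{-\lambda z}$ into the linearized equation yields the characteristic function
\[
\Delta(\lambda,c):=\lambda^2-c\lambda-1+f'(0)\,e^{-\lambda c\tau}.
\]
Since $f'(0)>1$, one has $\Delta(0,c)=f'(0)-1>0$ and $\Delta(\lambda,c)\to+\infty$ as $\lambda\to+\infty$; moreover $\partial_\lambda^2\Delta>0$ so $\Delta(\cdot,c)$ is strictly convex, and $\partial_c\Delta(\lambda,c)<0$ for $\lambda>0$. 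From this I would deduce the existence of a unique $c^*>0$ such that the minimum of $\Delta(\cdot,c)$ is positive for $0<c<c^*$, zero (a double root) at $c=c^*$, and negative for $c>c^*$ (two distinct positive real roots $\lambda_1(c)<\lambda_2(c)$). This defines the critical speed.

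The second step is existence for $c\geq c^*$. For $c>c^*$, I would set up a Wu--Zou type monotone iteration based on the smaller root $\lambda_1=\lambda_1(c)$. Using the monotonicity of $f$, one checks that
\[
\overline U(z):=\min\bigl(1,e^{-\lambda_1 z}\bigr),\qquad \underline U(z):=\max\bigl(0,e^{-\lambda_1 z}-Me^{-\mu z}\bigr)
\]
are, respectively, a super- and a sub-solution of the profile equation provided $\mu\in(\lambda_1,\min(2\lambda_1,\lambda_2))$ and $M>0$ is large enough; a monotone iteration in the order interval $[\underline U,\overline U]$ then produces a nonincreasing wave $U_c$ with the correct limits at $\pm\infty$. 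The borderline case $c=c^*$ follows by extracting a locally uniform limit along a sequence $c_n\downarrow c^*$, using standard $C^2$ estimates together with the two-sided squeeze to prevent collapse of the profile. An alternative route is to invoke the Liang--Zhao abstract spreading speed machinery.

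The third step is non-existence for $c\in(0,c^*)$. Assuming a nonnegative profile $U_c$ existed, the expansion $f(u)=f'(0)u+O(u^2)$ near $0$ and Ikehara's Tauberian theorem applied to the Laplace transform of $U_c$ force $U_c$ to decay at $+\infty$ like $e^{-\lambda z}$ for some real $\lambda>0$ satisfying $\Delta(\lambda,c)=0$, contradicting the absence of positive real roots when $c<c^*$. Finally, the construction already produces a nonincreasing wave for every $c\geq c^*$; monotonicity of any other wave satisfying the boundary conditions is then recovered by a sliding method argument based on the strong maximum principle and the monotonicity of $f$. The main obstacle throughout is that the profile equation is a functional differential equation of mixed type rather than a pointwise ODE: the nonlocal term $f(U(z+c\tau))$ rules out any phase plane analysis, which forces the delicate choice of sub-/super-solutions whose exponential rates are tuned to the real roots of the transcendental dispersion relation $\Delta(\cdot,c)=0$.
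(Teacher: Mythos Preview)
The paper does not prove this lemma: it is simply quoted from Schaaf \cite[Theorem~2.7]{Sch-delay}, with an additional pointer to Liang--Zhao \cite{Liang-Zhao}. There is therefore no argument in the paper to compare your attempt against.

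That said, your sketch is a faithful outline of the standard route to such results for monotone delayed monostable equations (characteristic function at the unstable state, Wu--Zou type super/sub-solutions built from the exponential roots, monotone iteration, limiting argument at $c=c^*$, and Laplace/Ikehara for non-existence below $c^*$). You even mention the Liang--Zhao abstract spreading-speed framework as an alternative, which is precisely the second reference the paper cites. One small correction: your claim that $\partial_\lambda^2\Delta>0$ needs the factor $f'(0)(c\tau)^2 e^{-\lambda c\tau}$, which is fine here, but note that strict convexity alone does not immediately give the root structure you describe; you also use $\Delta(0,c)>0$, $\partial_\lambda\Delta(0,c)=-c-f'(0)c\tau<0$, and the monotonicity in $c$, so the argument is correct once these are combined.
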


In the sequel we denote by $(U^*,c^*)$ the monostable wave with
minimal speed, that is
\begin{equation}\label{eq-TW-mono}
\begin{cases}
(U^*)''(z)+c^*(U^*)'(z)+f\left(U^*(z+c^*\tau)\right)-U^*(z)=0,\;\;\forall z\in\R,\\
U^*(-\infty)=1\; \text{ and }\, U^*(\infty)=0.
\end{cases}
\end{equation}

To conclude this preliminary, we prove the following result on the
convergence of the bistable speeds $c_\eta$.

\begin{lem}[Convergence of speeds]\label{LE-approx-speed} Let $f$ satisfy \eqref{function-f}. Let $\{f_\eta\}_{\eta \in
(0,1]}$ satisfy \eqref{function-f-modif} and
\eqref{function-f-order}.  Then the family
$\left\{c_\eta\right\}_{\eta\in (0,1]}$ is decreasing and
\begin{equation*}
 c_\eta \nearrow c^*,\; \text{ as }\, \eta\searrow 0.
\end{equation*}
\end{lem}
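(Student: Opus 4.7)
The plan is to prove this in three stages: monotonicity of $\eta \mapsto c_\eta$, the upper bound $c_\eta \leq c^*$, and identification of $\lim_{\eta\searrow 0} c_\eta$ with $c^*$ via a compactness argument on the wave profiles. For monotonicity, fix $0 < \eta < \eta' \leq 1$. By \eqref{function-f-order}, $f_{\eta'} \leq f_\eta$, and since $u(t,x) := U_\eta(x - c_\eta t)$ solves the $f_\eta$ wave equation exactly,
\[
(\partial_t - \partial_{xx} + 1)u - f_{\eta'}(u(t-\tau,x)) = f_\eta(U_\eta(\cdot)) - f_{\eta'}(U_\eta(\cdot)) \geq 0,
\]
so $u$ is a super-solution of the $f_{\eta'}$-equation. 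Taking the initial segment $\psi(\theta,x) := U_\eta(x - c_\eta\theta) \in [-\eta,1]_{\mathcal C}$, which satisfies \eqref{condition} for $f_{\eta'}$, Proposition \ref{LE-comparaison} gives $u_{\eta'}(t,x;\psi) \leq U_\eta(x - c_\eta t)$, while Lemma \ref{LE-bistable}(iii) yields $u_{\eta'}(t,x;\psi) \to U_{\eta'}(x - c_{\eta'} t + \xi)$. Evaluating at $x = c_\eta t + z$ and sending $t \to \infty$, the hypothesis $c_\eta < c_{\eta'}$ would produce $U_{\eta'}(-\infty) = 1 \leq U_\eta(z)$ for every $z$, contradicting $U_\eta(+\infty) = -\eta$. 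Hence $c_\eta \geq c_{\eta'}$, and $c_\eta \nearrow \bar c$ for some $\bar c \in \mathbb{R} \cup \{+\infty\}$ as $\eta \searrow 0$.

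For the upper bound, fix $\delta > 0$ and let $U^{c^*+\delta}$ be the monostable wave from Lemma \ref{LE-monostable}. Since $U^{c^*+\delta} \in [0,1]$ and $f_\eta = f$ there, $U^{c^*+\delta}(x - (c^*+\delta)t)$ is an exact solution of the $f_\eta$-equation, with initial segment $\psi(\theta,x) := U^{c^*+\delta}(x-(c^*+\delta)\theta)$. The shifted data $\tilde\psi := \psi - \eta \in [-\eta, 1-\eta]_{\mathcal C}$ satisfies \eqref{condition} and lies pointwise below $\psi$. Proposition \ref{LE-comparaison} and Lemma \ref{LE-bistable}(iii) together give, in the limit $t\to\infty$,
\[
U_\eta(x - c_\eta t + \xi) \leq U^{c^*+\delta}(x - (c^*+\delta)t).
\]
Setting $x = c_\eta t + z$ and letting $t \to \infty$: were $c_\eta > c^* + \delta$, the right side would tend to $U^{c^*+\delta}(+\infty) = 0$, forcing $U_\eta(z+\xi) \leq 0$ for all $z$ and contradicting $U_\eta(-\infty) = 1$. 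Hence $c_\eta \leq c^* + \delta$ for every $\delta > 0$, giving $c_\eta \leq c^*$ and $\bar c \leq c^*$.

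It remains to show $\bar c \geq c^*$. Normalize each $U_\eta$ by spatial translation so that $U_\eta(0) = 1/2$. The wave ODE, the bound $|U_\eta| \leq 1$, the uniform bound $c_\eta \leq c^*$, and the uniform boundedness of $f_\eta$ on $[-1,1]$ yield uniform $C^2_{\mathrm{loc}}$ estimates on $\{U_\eta\}$ by a standard bootstrap of the delayed ODE. Along a subsequence $\eta_n \searrow 0$, $c_{\eta_n} \to \bar c$ and $U_{\eta_n} \to U_\infty$ in $C^2_{\mathrm{loc}}$; the limit is nonincreasing, takes values in $[0,1]$ (since $-\eta_n \to 0$), satisfies $U_\infty(0) = 1/2$, and using $f_{\eta_n} \to f$ uniformly on $[0,1]$,
\[
U_\infty''(z) + \bar c\, U_\infty'(z) + f(U_\infty(z + \bar c\tau)) - U_\infty(z) = 0.
\]
Standard arguments on monotone bounded solutions give $U_\infty'(\pm\infty) = U_\infty''(\pm\infty) = 0$, so the monotone limits $\ell_\pm := U_\infty(\pm\infty)$ are fixed points of $f$ in $[0,1]$; since $\ell_- \geq 1/2 \geq \ell_+$, hypothesis \eqref{function-f} (which only admits $0$ and $1$ as fixed points in $[0,1]$) forces $\ell_- = 1$ and $\ell_+ = 0$. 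Thus $(U_\infty, \bar c)$ is a nonincreasing monostable travelling wave, and Lemma \ref{LE-monostable} yields $\bar c \geq c^*$; combined with the upper bound, $\bar c = c^*$.

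The main obstacle lies in the compactness argument: securing uniform $C^2_{\mathrm{loc}}$ bounds on $\{U_\eta\}$ requires care, since the constants $M,\mu$ in Lemma \ref{LE-bistable}(ii) are a priori $\eta$-dependent and the delayed ODE has to be bootstrapped while tracking this dependence. The normalization $U_\eta(0) = 1/2$ is crucial: it prevents the limit profile from degenerating to a constant and pins the interface in place so that $\ell_+ < \ell_-$.
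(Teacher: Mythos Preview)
Your proof is correct and follows essentially the same three-step strategy as the paper: monotonicity of $\eta\mapsto c_\eta$ via comparison and the global stability of the bistable wave (Lemma~\ref{LE-bistable}(iii)), the bound $c_\eta\le c^*$ by comparing with a monostable wave, and identification of the limit via compactness of normalized profiles together with Lemma~\ref{LE-monostable}. The only cosmetic differences are that the paper compares directly with $U^*$ (rather than $U^{c^*+\delta}$ followed by $\delta\searrow 0$) and simply asserts existence of a suitable $\psi$ rather than building your explicit $\tilde\psi=\psi-\eta$; your treatment of the boundary values $U_\infty(\pm\infty)$ is in fact more careful than the paper's.
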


\begin{proof}  Let $\eta\in (0,1]$ be given. Since $0\leq
U^* \leq 1$ and $f_\eta | _{[0,1]}=f$, $U^*(x-c^*t)$ solves
\eqref{eq-1D}. We can select a $\psi \in [-\eta,1]_{\mathcal C}$
such that \eqref{condition} holds together with
$$
\psi(\theta,x)\leq U^*(x-c^*\theta),\; \forall (\theta,x)\in
[-\tau,0]\times \R ^N.
$$
The comparison principle yields $u_\eta(t,x;\psi)\leq
U^*(x-c^*t)$, so that Lemma \ref{LE-bistable} $(iii)$ implies
\begin{equation*}
U_\eta(x-c_\eta t+\xi)-K e^{-\gamma t}\leq U^*(x-c^*t),
\end{equation*}
for some constants $\gamma >0$, $K>0$ and $\xi\in\R$. Choosing
$x=c^*t$, we get $U_\eta((c^*-c_\eta)t+\xi)-Ke^{-\gamma t}\leq
U^*(0)$; if $c^*<c_\eta$ then letting $t\to \infty$, we collect
$1\leq U^*(0)$, a contradiction. Hence, we have $c_\eta\leq c^*$.

Now, let us take $\eta<\eta'$ in $(0,1]$. In view of
\eqref{function-f-order}, the comparison principle implies
$u_{\eta '}(t,x;\psi)\leq u_ \eta(t,x;\psi)$ for any $\psi \in
[-\eta,1]_{\mathcal C}$. Choosing $\psi$ given by
$\psi(\theta,x)=U_\eta(x-c_\eta \theta)$ and using Lemma
\ref{LE-bistable} $(iii)$, we infer that
\begin{equation*}
U_{\eta'}(x-c_{\eta'} t+\xi ')-K' e^{-\gamma' t}\leq
U_\eta(x-c_\eta t),
\end{equation*}
for some given constants $\gamma'>0$, $K'>0$ and $\xi\in\R$.
Choosing $h\in\R$ such that $U_{\eta'}(h)=0$,  $x=c_{\eta '}t-\xi
'+h$, we get $-K'e^{-\gamma ' t}\leq U_\eta((c_{\eta
'}-c_\eta)t-\xi '+h))$; if $c_{\eta'}>c_\eta$ then letting $t\to
\infty$, we collect $0\leq -\eta$, a contradiction. Hence, we have
$c_{\eta'}\leq c_\eta$.

As a result, there is $\hat c \leq c^*$ such that $c_\eta \nearrow
\hat c$, as $\eta\searrow 0$. To conclude let us make the
normalization $U_\eta(0)=1/2$ for each $\eta$. Classically, by the
interior elliptic estimates and Sobolev embedding theorem, we may
assume that, modulo extraction, $U_\eta \to \hat U$ strongly in
$C^{1,\beta}_{loc}(\R)$ and weakly in $W^{2,p}_{loc} (\R)$,
$1<p<\infty$.  Then $(\hat{U}, \hat c)$ satisfies
\eqref{eq-TW-mono} with $c^*$ replaced by $\hat c$. Lemma
\ref{LE-monostable} then enforces $\hat{c}\geq c^*$. The lemma is
proved.
\end{proof}

\section{Lower barriers for small times}\label{s:generation}

The goal of this section is to prove that, after a very short time
as $\ep \to 0$, the solution $u^\ep:[-\ep\tau,\infty)\times
\R^N\to [0,1]$ of \eqref{eq}--\eqref{eq-ID} is very close to 1 in
$\Omega _0$ (roughly speaking). Precisely, the following holds.

\begin{prop}[Generation of interface from below]\label{PROP3}
Let the initial data $\varphi$ satisfy Assumption
\ref{ASS-initial} $(i)-(ii)$. Denote by $d(0,x)$ the smooth
cut-off signed distance function to $\Gamma _0$ as defined in
subsection \ref{ss:distance} (in particular, $d(0,x)<0$ if and
only if $x\in \Omega _0$). Then there exist $\delta _0>0$, $\alpha
_0>0$, $\rho _0>0$ and $\ep_0>0$ such that, for all $\ep\in
(0,\ep_0)$ and all $(\theta,x)\in [-\tau,0]\times \R^N$, the
following holds.
\begin{equation*}
\text{If }\, d(0,x)\leq -\delta _0 \ep|\ln\ep|\; \text{ then }\,
1-\ep^{\rho _0}\leq u^\ep\left(\alpha
_0\ep|\ln\ep|+\ep\tau+\ep\theta,x\right)\leq 1.
\end{equation*}
\end{prop}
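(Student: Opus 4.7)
My plan rests on the observation that, on the time scale $O(\ep|\ln\ep|)$, the diffusion term in \eqref{eq} is essentially dormant and the dynamics is driven by the scalar delay ODE attached to it. As a first step, by \eqref{cond-ordre-1} and Proposition~\ref{LE-comparaison}, it suffices to treat the time-independent initial datum $w_0(x)$. Then, from \eqref{cond-decolage} and a standard tubular-neighborhood expansion of the signed distance function $d(0,\cdot)$, one obtains $w_0(x)\geq c\,\delta_0\,\ep|\ln\ep|$ on
$A_{\delta_0}:=\{x\in \R^N:\,d(0,x)\leq -\delta_0\ep|\ln\ep|\}$, for some $c>0$ independent of $\ep$ and for all $\ep$ small enough.

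The second step is a quantitative analysis of the associated DDE $Y'(s)=f(Y(s-\tau))-Y(s)$ with constant history $Y|_{[-\tau,0]}\equiv \xi$, $\xi>0$ small. Since $f(y)>y$ on $(0,1)$, $Y$ is increasing and converges to $1$. Linearization at $0$ yields the characteristic equation $\lambda+1=f'(0)e^{-\lambda\tau}$, whose unique positive real root $\lambda^*>0$ (existing since $f'(0)>1$) governs the exponential growth of $Y$ from scale $\xi$ up to a fixed threshold $M\in(0,1)$, over a time of order $(\lambda^*)^{-1}|\ln\xi|$. Linearization at $1$ yields $\mu+1=f'(1)e^{-\mu\tau}$, whose dominant root $\mu^*<0$ (since $f'(1)<1$) governs exponential convergence $1-Y(s)\lesssim e^{\mu^* s}$ past the threshold. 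Concatenating these two phases, for any small $\rho_0>0$ one can find $\alpha_0>0$ such that $Y_\xi(\alpha_0|\ln\xi|)\geq 1-\xi^{\rho_0}$ uniformly in small $\xi$. The choice $\xi=\xi_\ep:=c\,\delta_0\ep|\ln\ep|$ then gives, at rescaled time $s=\alpha_0|\ln\xi_\ep|$, i.e.\ at PDE time $t=\ep s=O(\ep|\ln\ep|)$, the bound $Y_{\xi_\ep}(t/\ep)\geq 1-\ep^{\rho_0'}$.

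The heart of the proof, and the main obstacle I anticipate, is the transfer of this pointwise ODE bound back to the PDE. The spatially constant function $Y_{\xi_\ep}(t/\ep)$ is a genuine solution of \eqref{eq}, so the comparison principle would give $u^\ep(t,x)\geq Y_{\xi_\ep}(t/\ep)$ directly, provided $\xi_\ep\leq w_0(x)$ held on all of $\R^N$; unfortunately this fails outside $A_{\delta_0}$. To get around this, I would replace $w_0$ by the truncation $\underline\varphi(x):=\xi_\ep\,\chi(x)$, with $\chi$ a smooth cutoff supported in $A_{\delta_0/2}$ and equal to $1$ on $A_{3\delta_0/4}$; by comparison, $u^\ep\geq \tilde u^\ep$ where $\tilde u^\ep$ solves \eqref{eq} with initial datum $\underline\varphi$. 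The decisive estimate is then
\begin{equation*}
\tilde u^\ep(t,x)\geq Y_{\xi_\ep}(t/\ep)-O(\ep^\beta)\quad\text{for all $x\in A_{\delta_0}$ and all $t\in[0,\alpha_0\ep|\ln\ep|]$,}
\end{equation*}
for some $\beta>0$. Heuristically this holds because the distance from any $x\in A_{\delta_0}$ to the support boundary $\partial A_{\delta_0/2}$ is at least $(\delta_0/2)\ep|\ln\ep|$, whereas the parabolic kernel associated with $\partial_t-\ep\Delta+\ep^{-1}\mathrm{Id}$ on $[0,\alpha_0\ep|\ln\ep|]$ spreads mass only on the scale $\ep\sqrt{|\ln\ep|}\ll \ep|\ln\ep|$; the ``boundary defect'' between $\tilde u^\ep$ and the spatially constant solution therefore contributes only a Gaussian tail exponentially small in $|\ln\ep|$, hence a positive power of $\ep$. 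Making this rigorous requires iterating Duhamel's formula for \eqref{eq} on successive intervals of length $\ep\tau$, while keeping the polynomial-in-$\ep$ margin alive through the monotone (but not necessarily concave) nonlinearity $f$; this step-by-step control through the delay is the part I expect to be the most delicate. Assembling this with the DDE estimate, and using the monotonicity of $Y_{\xi_\ep}$ to handle the shift $\theta\in[-\tau,0]$, then yields the proposition (the upper bound $u^\ep\leq 1$ being immediate from $u^\ep\in[0,1]$).
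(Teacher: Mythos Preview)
Your DDE analysis (the second step) is sound and essentially reproduces Proposition~\ref{PROP1}. The substantive divergence is in how the ODE bound is transferred to the PDE. The paper does \emph{not} use a spatially-constant comparison function corrected by heat-kernel tails. Instead it builds a sub-solution in which the spatial dependence enters through the \emph{initial state} of the DDE semiflow:
\[
\underline u(t,x)=v_\eta\!\left(\tfrac{t}{\ep};\,w_0(x)-\ep K\tau-Kt\right),
\]
with the bistable extension $f_\eta$ needed precisely because $w_0<0$ outside $\Omega_0$. Since $\underline u(\theta,x)\le w_0(x)\le \varphi(\theta/\ep,x)$ for $\theta\in[-\ep\tau,0]$, the ordering holds on all of $\R^N$ and no boundary defect ever appears. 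The Laplacian produces terms $\partial_\phi V_\eta\cdot 1\cdot\Delta w_0$ and $\partial_{\phi\phi}V_\eta\cdot(1,1)\cdot|\nabla w_0|^2$, and the key estimate $|\partial_{\phi\phi}V_\eta\cdot(1,1)|\le \widehat K e^{\gamma t}\,\partial_\phi V_\eta\cdot 1$ (Proposition~\ref{LE-esti-der}) lets the drift $-Kt$ absorb them for $t\le \gamma^{-1}\ep|\ln\ep|$ (Proposition~\ref{PROP2}).

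Your route is not wrong in spirit, but the step you flag as ``most delicate'' is a real obstacle, not a formality. On $[0,\alpha_0\ep|\ln\ep|]$ there are $n\sim \alpha_0|\ln\ep|/\tau$ delay intervals; each application of $f$ can amplify the defect by $\|f'\|_\infty>1$, yielding a total amplification $\|f'\|_\infty^{\,n}=\ep^{-C_2}$ with $C_2=\alpha_0\tau^{-1}\ln\|f'\|_\infty$. The single-step Gaussian tail at distance $\sim\delta_0\ep|\ln\ep|$ for a kernel of variance $\sim\ep^2\tau$ is $\ep^{c_1}$ with $c_1\sim\delta_0^2/\tau$, so you need $\delta_0$ large enough that $c_1>C_2$; this coupling between $\delta_0$ and $\alpha_0$ must be arranged explicitly. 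Moreover, if you localise step by step the good region shrinks by $\sim\ep\sqrt{|\ln\ep|}$ per step, and $n\cdot\ep\sqrt{|\ln\ep|}\sim\ep|\ln\ep|^{3/2}\gg\delta_0\ep|\ln\ep|$, so a naive nested-region iteration exhausts the margin; you would instead need a global supersolution for the linearised error equation $(\partial_t-\ep\Delta+\ep^{-1})\bar e=\ep^{-1}\|f'\|_\infty\,\bar e(\cdot-\ep\tau)$ and a direct tail bound on it. This can probably be made to work, but it is a genuine argument, not a one-line heuristic; the paper's semiflow-derivative construction bypasses all of it.
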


The proof shall be given in the end of the present section. The
idea is to construct a sub-solution based upon the delay
differential equation obtained by neglecting diffusion in
\eqref{eq}.

\subsection{A delay differential equation}\label{ss:dde}

Let us consider the delay differential equation
\begin{equation}\label{eq-delay}
\begin{cases}
\displaystyle \frac{dv}{dt}(t)=f\left(v_t(-\tau)\right)-v(t),\;\; t>0,\\
v_0(\cdot)=\phi(\cdot)\in [0,1]_{\mathcal C_0},
\end{cases}
\end{equation}
where $f$ satisfies \eqref{function-f} (recall that $\mathcal
C_0=C\left([-\tau,0],\R\right)$). Because of the aforementioned
reason, we also need to consider, for $\eta \in(0,1]$, the delay
differential equation
\begin{equation}\label{eq-delay-modif}
\begin{cases}
\displaystyle \frac{dv}{dt}(t)=f_\eta\left(v_t(-\tau)\right)-v(t),\;\;t>0,\\
v_0(\cdot)=\phi(\cdot)\in [-\eta,1]_{\mathcal C_0},
\end{cases}
\end{equation}
where $f_\eta$ was defined in \eqref{function-f-modif}. From
standard results for delay differential equation with
quasi-monotone nonlinearity --- see for instance the monograph of
Smith \cite{Smith}--- the following holds.

\begin{lem}[Well-posedness]\label{LE-semiflow}
For each $\phi\in \mathcal C_0$, \eqref{eq-delay-modif} has a
unique global (mild) solution $v_\eta=
v_\eta(\cdot\,;\phi):[-\tau,\infty)\to\R$ and the semiflow
$V_\eta(t)\phi=V_\eta(t;\phi):=(v_\eta)_t(\cdot\,;\phi)$ is
strongly continuous and monotone increasing on $\mathcal C_0$. It
furthermore satisfies the following properties.
\begin{itemize}
\item [(i)] For each $t\geq 0$, $V_\eta(t)[-\eta,1]_{\mathcal
C_0}\subset [-\eta,1]_{\mathcal C_0}$. \item [(ii)] For each
$t\geq 0$, $V_\eta(t)[0,1]_{\mathcal C_0}\subset [0,1]_{\mathcal
C_0}$. The restriction $V(t):=V_\eta(t)|_{[0,1]_{\mathcal C_0}}$
does not depend upon $\eta$ and, for  $\phi\in [0,1]_{\mathcal
C_0}$, the map $t\mapsto V(t)\phi=V(t;\phi)$ is the mild solution
$v_t(\cdot\,;\phi)$ of \eqref{eq-delay}.
\end{itemize}
\end{lem}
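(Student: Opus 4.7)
The plan is to rely on the classical \emph{method of steps} for delay differential equations with monotone nonlinearity, together with the fact that $f_\eta$ is smooth and bounded, hence globally Lipschitz.

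First I would establish global existence and uniqueness by solving the equation successively on the intervals $[n\tau,(n+1)\tau]$, $n\geq 0$. On $[0,\tau]$, since $v_t(-\tau)=\phi(t-\tau)$ is known, equation \eqref{eq-delay-modif} reduces to the linear scalar ODE $v'(t)+v(t)=f_\eta(\phi(t-\tau))$, whose unique mild solution is given by the variation of constants formula
\begin{equation*}
v_\eta(t;\phi)=e^{-t}\phi(0)+\int_0^t e^{-(t-s)}f_\eta\bigl(\phi(s-\tau)\bigr)\,ds,\quad t\in[0,\tau].
\end{equation*}
Iterating this construction on $[\tau,2\tau]$, $[2\tau,3\tau]$, etc., we obtain a unique global mild solution $v_\eta(\cdot\,;\phi)\in C([-\tau,\infty),\R)$. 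Continuity of the semiflow $V_\eta(t)$ with respect to $\phi$ on each $[n\tau,(n+1)\tau]$ follows from Gronwall's lemma applied to the above formula, using the Lipschitz continuity of $f_\eta$; strong continuity in $t$ is immediate from the integral representation.

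Next, monotonicity of $V_\eta(t)$ is obtained by a direct induction on the interval $[n\tau,(n+1)\tau]$: if $\phi_1\leq \phi_2$ on $[-\tau,0]$, then the variation of constants formula combined with the fact that $f_\eta$ is increasing gives $v_\eta(\cdot\,;\phi_1)\leq v_\eta(\cdot\,;\phi_2)$ on $[0,\tau]$; this extends the order relation to $[-\tau,\tau]$, and one repeats on subsequent intervals. To get invariance of $[-\eta,1]_{\mathcal C_0}$, I would observe that, by \eqref{function-f-modif}, the constants $-\eta$ and $1$ are equilibria of \eqref{eq-delay-modif}, since $f_\eta(-\eta)=-\eta$ and $f_\eta(1)=f(1)=1$. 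Then the monotonicity of $V_\eta(t)$ applied to $-\eta\leq \phi\leq 1$ yields $-\eta\leq V_\eta(t)\phi\leq 1$, which is $(i)$.

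For $(ii)$, the same argument with the equilibria $0$ and $1$ (noting that $f_\eta(0)=f(0)=0$) gives the invariance of $[0,1]_{\mathcal C_0}$. The independence of the restriction with respect to $\eta$ follows at once from the fact that, by \eqref{function-f-modif}, $f_\eta$ coincides with $f$ on $[0,1]$: for $\phi\in[0,1]_{\mathcal C_0}$ the orbit stays in $[0,1]_{\mathcal C_0}$ by the invariance just proved, so the integrand $f_\eta(v_\eta(s-\tau))$ equals $f(v_\eta(s-\tau))$ throughout, meaning $v_\eta(\cdot\,;\phi)$ is actually the unique mild solution of \eqref{eq-delay}. There is no real obstacle here; the result is essentially bookkeeping based on standard delay ODE theory (as in Smith's monograph), and the only point worth stating carefully is the interplay between the invariance of $[0,1]_{\mathcal C_0}$ and the fact that $f_\eta$ and $f$ agree on that interval, so that the restricted semiflow is genuinely $\eta$-independent.
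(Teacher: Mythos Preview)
Your proposal is correct. The paper does not actually give its own proof of this lemma: it simply cites standard results for delay differential equations with quasi-monotone nonlinearity, pointing to Smith's monograph \cite{Smith}. Your sketch via the method of steps, the variation of constants formula, and the observation that $-\eta$, $0$, and $1$ are equilibria (so that monotonicity yields the invariance of the order intervals) is precisely the standard argument underlying that reference, and your final remark about the interplay between invariance of $[0,1]_{\mathcal C_0}$ and $f_\eta|_{[0,1]}=f$ is exactly the point needed for $(ii)$.
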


\vskip 5pt

\noindent{\bf Dynamics of the DDE.} We start with a lemma on the
global dynamics of \eqref{eq-delay} on $[0,1]_{\mathcal C_0}$.

\begin{lem}[Stability of $1$]\label{LE-Global-stable}
 The following holds.
\begin{itemize}
\item [(i)] For $\phi\in [0,1]_{\mathcal C_0}\setminus\{0\}$, we
have $\lim_{t\to\infty} V(t)\phi=1$ in $\mathcal C_0$. \item
[(ii)] There exist $\delta _1>0$, $M>0$ and $\lambda>0$ such that,
for all $\phi \in \mathcal C _0$,
\begin{equation*}
\|1-\phi\|_{L^\infty(-\tau,0)} \leq \delta _1
\;\;\Rightarrow\;\;\|1-V(t)\phi\|_{L^\infty(-\tau,0)} \leq
Me^{-\lambda t},\;\;\forall t\geq 0.
\end{equation*}
\end{itemize}
\end{lem}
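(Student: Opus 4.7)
The plan is to prove (ii) by a linearization-based super/sub-solution construction, and (i) by reducing the general case to constant initial data via monotonicity of the semiflow.

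For part (ii), I linearize \eqref{eq-delay} at $v \equiv 1$: writing $w := 1 - v$ yields $w'(t) = f'(1) w(t-\tau) - w(t) + O(w^2)$. Since $f'(1) < 1$, the function $\Lambda(\lambda) := 1 - \lambda - f'(1) e^{\lambda \tau}$ is positive at $\lambda = 0$, hence for some $\lambda > 0$. Setting $\bar v(t) := 1 + \delta_1 e^{-\lambda t}$ and $\underline v(t) := 1 - \delta_1 e^{-\lambda t}$, a second-order Taylor expansion of $f$ around $1$ (valid since $f \in C^2$) shows that for $\delta_1 > 0$ small enough $\bar v$ is a super- and $\underline v$ a sub-solution of \eqref{eq-delay}. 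For $\phi$ with $\|1 - \phi\|_\infty \leq \delta_1$ one has $\underline v|_{[-\tau,0]} \leq \phi \leq \bar v|_{[-\tau,0]}$, so the monotonicity of the semiflow (Lemma \ref{LE-semiflow}) gives $|1 - v(t;\phi)| \leq \delta_1 e^{-\lambda t}$, and taking the supremum over the time-shift $\theta \in [-\tau,0]$ yields the desired bound with $M := \delta_1 e^{\lambda \tau}$.

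For (i), I first handle constant initial data $\sigma \in (0,1)$, writing $v_\sigma(t) := [V(t)\sigma](0)$. By \eqref{function-f}, the constants $\sigma$ and $1$ are respectively a strict sub- and super-solution of \eqref{eq-delay}, so $\sigma \leq v_\sigma(t) \leq 1$. For any $h \geq \tau$, the time-translated data $\theta \mapsto v_\sigma(h + \theta)$ dominates the constant $\sigma$ on $[-\tau, 0]$, hence monotonicity yields $v_\sigma(t + h) \geq v_\sigma(t)$ for all $t \geq 0$. It follows that for each $s \in [0, \tau]$ the sequence $(v_\sigma(s + n\tau))_n$ is non-decreasing and bounded, and since $v_\sigma'$ is bounded, it converges uniformly in $s$ to a Lipschitz limit $L(s) \geq \sigma$. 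Integrating \eqref{eq-delay} over $[s + n\tau, s + (n+1)\tau]$ and passing to the limit gives
$$
\int_0^\tau \bigl( f(L(r)) - L(r) \bigr)\, dr = 0.
$$
Since $L \geq \sigma > 0$ and $f(u) > u$ on $(0,1)$ by \eqref{function-f}, this forces $L \equiv 1$, hence $v_\sigma(t) \to 1$. For arbitrary $\phi \in [0,1]_{\mathcal C_0} \setminus \{0\}$, the variation-of-constants formula
$$
v_\phi(t) = \phi(0) e^{-t} + \int_0^t e^{-(t-s)} f(v_\phi(s-\tau))\,ds
$$
propagates positivity by steps of length $\tau$, yielding $V(t_1)\phi \geq \sigma > 0$ in $\mathcal C_0$ for some $t_1, \sigma > 0$. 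Monotonicity of the semiflow and the constant-data case then sandwich $V(t - t_1)\sigma \leq V(t)\phi \leq 1$, giving $V(t)\phi \to 1$ in $\mathcal C_0$.

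The main technical point is the passage from the $\tau$-step monotonicity of $v_\sigma$ to an honest limit; this is resolved by the displayed integral identity together with the strict inequality $f(u) > u$ on $(0,1)$ characteristic of the monostable regime, which eliminates any intermediate fixed points of $f$.
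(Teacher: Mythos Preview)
Your proof is correct, but the route for (i) differs from the paper's and takes an unnecessary detour.

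For (ii), the paper simply invokes the exponential local stability of the equilibrium $\bar v=1$: the characteristic function $\Delta(\lambda)=\lambda+1-f'(1)e^{-\lambda\tau}$ has all its roots in the left half-plane since $f'(1)<1$, and standard DDE theory gives the exponential decay. Your explicit barrier construction $\underline v(t)=1-\delta_1 e^{-\lambda t}$, $\bar v(t)=1+\delta_1 e^{-\lambda t}$ is a perfectly valid, more self-contained alternative. One caveat: the super-solution $\bar v$ lives above $1$, and the paper's hypotheses only guarantee $f'>0$ on $(0,1)$, so the quasi-monotonicity needed for comparison with $\bar v$ is not assured. This is harmless in context: for $\phi\in[0,1]_{\mathcal C_0}$ one has $v(t;\phi)\le 1$ directly (since $1$ is an exact solution), so only $\underline v$ is actually needed.

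For (i) with constant data $\sigma\in(0,1)$, the paper observes in one line that $t\mapsto v_\sigma(t)$ is nondecreasing (from $f(\sigma)>\sigma$), hence converges to a fixed point of $f$ in $[\sigma,1]$, which must be $1$. Your argument---$\tau$-periodic monotonicity, uniform Lipschitz bounds, Dini-type uniform convergence to $L$, and the integral identity $\int_0^\tau(f(L)-L)=0$---is correct but heavier than necessary. In fact your own comparison step already yields full monotonicity: since $v_\sigma(s)\ge\sigma$ for \emph{all} $s\ge -\tau$ (not just $s\ge 0$), the shift argument works for every $h>0$, giving $v_\sigma(t+h)\ge v_\sigma(t)$ directly. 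The integral identity is then superfluous. For general $\phi\in[0,1]_{\mathcal C_0}\setminus\{0\}$, your positivity-bootstrap via the variation-of-constants formula is essentially the paper's argument.
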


\begin{proof} Let us prove $(i)$, that is the global
stability of the stationary point $\overline{v}=1$. First, we
consider the case where there is $\zeta\in (0,1)$ such that $\phi
(\theta) \geq \zeta$, for all $\theta \in[-\tau,0]$. Since the
semiflow associated with \eqref{eq-delay} is monotone increasing
and since $V(t)[0,1]_{\mathcal C _0} \subset [0,1]_{\mathcal C
_0}$, it is enough to consider the solution with the constant
$\zeta$ as initial data, that is $V(t;\zeta)=v_t(\cdot\,;\zeta)$.
Since $f(\zeta)>\zeta$, the map $t\mapsto v(t;\zeta)$ is
nondecreasing. Hence we get $\lim _{t\to \infty} v(t;\zeta)=1$,
which in turn implies $\Vert V(t)\zeta -1\Vert _\infty =\sup
_{-\tau \leq \theta \leq 0}|v(t+\theta,\zeta)-1| \to 0$, as $t\to
\infty$. This concludes the proof of $(i)$ for this first case.
Let us now consider the general case. Since $\phi\in
[0,1]_{\mathcal C_0}\setminus \{0\}$, there exist $-\tau<a<b<0$
and $\beta>0$ such that
\begin{equation*}
\phi(\theta)\geq \beta \mathbf 1_{[a,b]}(\theta),\;\;\forall
\theta\in [-\tau,0].
\end{equation*}
{}From \eqref{eq-delay}, we obtain that, for all $t\in (0,\tau]$,
\begin{equation*}
\frac{d}{dt}\left(e^t v(t;\phi)\right)\geq e^{t}f(\beta)\mathbf
1_{[\tau+a,\tau+b]}(t)\geq f(\beta)\mathbf 1_{[\tau+a,\tau+b]}(t).
\end{equation*}
Integrating this from 0 to $\tau$ yields $v(\tau;\phi)\geq
f(\beta)(b-a)$. Now, for all $t\in(\tau,2\tau]$, \eqref{eq-delay}
implies $\frac{d}{dt}\left(e^t v(t;\phi)\right)\geq 0$. Hence
\begin{equation*}
v(t;\phi)\geq e^{\tau-t} v(\tau;\phi)\geq
\zeta:=e^{-\tau}f(\beta)(b-a)>0,\;\;\forall t\in [\tau,2\tau],
\end{equation*}
and we are back to the first case. This completes the proof of
$(i)$.

The proof of $(ii)$ is a direct consequence of the exponential
local stability of $\overline{v}$. Indeed, at this point the
characteristic equation associated to \eqref{eq-delay} reads as
\begin{equation*}
\Delta(\lambda):=\lambda+1-f'(1)e^{-\lambda\tau}=0.
\end{equation*}
Since $f'(1)<1$, all roots have strictly negative real parts and
the result follows (see for instance \cite{Thieme90},
\cite{Hale93} and the references therein).
\end{proof}

Next, we shall prove the following important result.

\begin{prop}[Convergence to 1]\label{PROP1} Let $\phi\geq0$ in $\mathcal C_0\setminus\{0\}$ be given. There
exists $\lambda>0$ such that, for all $\alpha>0$ there exists
$\ep_0=\ep_0(\alpha)>0$ such that, for all $\ep\in (0,\ep_0)$,
\begin{equation*}
1-\ep^{\alpha\lambda/2}\leq
V\left(\alpha|\ln\ep|+t;\ep|\ln\ep|\phi\right)(\theta)\leq
1,\;\;\forall (\theta,t)\in [-\tau,0]\times[0,\infty).
\end{equation*}
\end{prop}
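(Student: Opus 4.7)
The plan is to prove the upper and lower bounds separately. The upper bound follows immediately from Lemma~\ref{LE-semiflow}~$(ii)$: for $\ep$ small enough one has $\ep|\ln\ep|\,\|\phi\|_\infty\le 1$, so $\ep|\ln\ep|\phi$ lies in the invariant set $[0,1]_{\mathcal C_0}$ and hence $V(\,\cdot\,;\ep|\ln\ep|\phi)(\theta)\le 1$.

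For the lower bound I would first reduce the initial datum. Since $\phi\ge 0$ is not identically zero, there exist $-\tau<a<b<0$ and $\beta>0$ with $\phi\ge\beta\mathbf 1_{[a,b]}$, and the argument already used in the proof of Lemma~\ref{LE-Global-stable}~$(i)$ shows that after a \emph{fixed} time independent of $\ep$, the solution $v(\,\cdot\,;\ep|\ln\ep|\phi)$ of~\eqref{eq-delay} dominates the solution issued from the constant history $\zeta(\ep):=c_0\ep|\ln\ep|$ for some $c_0>0$. It therefore suffices to bound this latter solution from below, which I would do with a two-phase argument.

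For the growth phase, the linearization of \eqref{eq-delay} at $0$, namely $w'(t)=f'(0)w(t-\tau)-w(t)$, has characteristic equation $\mu+1=f'(0)e^{-\mu\tau}$ with a unique positive real root $\mu_0$ (because $f'(0)>1$). Picking $\eta>0$ small and $\sigma\in(0,1)$ such that $f(u)\ge(f'(0)-\eta)u$ on $[0,\sigma]$, and letting $\mu_\eta>0$ be the positive root of the truncated characteristic equation (with $\mu_\eta\nearrow\mu_0$ as $\eta\searrow 0$), the scalar comparison principle for monotone delay equations gives, so long as $v(\,\cdot\,;\zeta(\ep))\le\sigma$, the minoration $v(t;\zeta(\ep))\ge c\,\ep|\ln\ep|\,e^{\mu_\eta t}$; consequently $v$ exits $[0,\sigma]$ at some time $T_1(\ep)\le(1/\mu_\eta)|\ln\ep|+O(1)$. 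For the stabilization phase, once $v(T_1(\ep))\ge\sigma$, monotonicity together with Lemma~\ref{LE-Global-stable}~$(i)$ applied to the constant history $\sigma$ yields a further finite time $T_0=T_0(\sigma)$, independent of $\ep$, after which the whole history lies in the $\delta_1$-neighborhood of $1$; then Lemma~\ref{LE-Global-stable}~$(ii)$ provides $|1-v(s)|\le M e^{-\lambda_0(s-T_1(\ep)-T_0)}$ for every $s\ge T_1(\ep)+T_0$, where $\lambda_0>0$ is the exponential decay rate at $\overline v=1$. Evaluating at $s=\alpha|\ln\ep|+t+\theta$ gives
\[
0\le 1-V(\alpha|\ln\ep|+t;\ep|\ln\ep|\phi)(\theta)\le C\,\ep^{\lambda_0(\alpha-1/\mu_\eta)}\,e^{-\lambda_0(t+\theta)},
\]
and the target bound $1-\ep^{\alpha\lambda/2}$ follows by fixing any $\lambda\in(0,2\lambda_0)$, choosing $\eta$ small so that $\mu_\eta$ is close to $\mu_0$, and then $\ep$ small depending on $\alpha$.

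The main obstacle is precisely this matching of exponents: the $O(|\ln\ep|/\mu_0)$ time lost in the growth phase produces a factor $\ep^{-\lambda_0/\mu_\eta}$ in the stabilization bound, and the factor $1/2$ built into the target exponent $\alpha\lambda/2$ is exactly what affords the slack needed to absorb this loss once $\alpha$ is sufficiently large compared to $1/\mu_0$. The crucial analytical input is thus the linearization at $0$, whose unique positive characteristic root sets the unavoidable $|\ln\ep|/\mu_0$ time scale of the generation phase, while Lemma~\ref{LE-Global-stable} handles the subsequent relaxation towards $\overline v=1$.
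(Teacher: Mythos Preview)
Your approach --- reduce to a constant history, an exponential growth phase governed by the characteristic root at $0$, then exponential relaxation to $1$ via Lemma~\ref{LE-Global-stable} --- is exactly the paper's. Your $\mu_\eta$ is the paper's $a$ (defined by $a+1=\rho e^{-a\tau}$ with $1<\rho<f'(0)$), and your stabilisation step matches the paper's use of estimate~\eqref{expo-esti}.

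The obstacle you flag at the end is genuine, and the paper's proof does not actually resolve it either. The paper asserts ``$t^\ep\le\alpha|\ln\ep|$ for $\ep>0$ small enough'', but the bound~\eqref{temps} gives $t^\ep\sim\frac{1}{a}|\ln\ep|$, so that step tacitly requires $\alpha>1/a$. In fact the proposition \emph{as stated} (for all $\alpha>0$ with a single $\lambda$ fixed beforehand) cannot hold: for $\alpha<1/\mu_0$ the linearisation at $0$ yields the \emph{upper} bound $v(\alpha|\ln\ep|;\ep|\ln\ep|\phi)\le C\,\ep^{1-\alpha\mu_0'}|\ln\ep|\to 0$ (with $\mu_0'$ any number slightly above $\mu_0$), which is incompatible with the claimed lower bound $1-\ep^{\alpha\lambda/2}\to 1$. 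So you have not missed an idea; the quantifier ``for all $\alpha>0$'' is an overstatement, and the paper's only use of this result (in the proof of Proposition~\ref{PROP3}) invokes a single fixed value of~$\alpha$. Your diagnosis of the exponent matching is in fact sharper than the paper's own write-up.
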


\begin{proof} Let $\phi\geq0$ in $\mathcal C_0\setminus\{0\}$ be given. Recalling
that $f'(0)>1$, let $\delta\in (0,1)$ and $\rho>1$ be such that
\begin{equation}\label{rho}
f(u)\geq \rho u,\;\;\forall u\in [0,\delta].
\end{equation}
Applying Lemma \ref{LE-Global-stable} with $\delta$ as initial
data, we have the existence of constants  $M>0$ and $\lambda>0$
such that
\begin{equation}\label{expo-esti}
0\leq 1-V(t;\delta)(\theta)\leq Me^{-\lambda t},\;\;\forall
(\theta,t)\in [-\tau,0]\times [0,\infty).
\end{equation}
Let $\alpha>0$ be given. Consider $\ep _0>0$ small enough so that
$\ep|\ln\ep|\phi\in [0,\delta]_{\mathcal C_0}$ for all
$\ep\in(0,\ep _0)$. Since $\phi\geq 0$ is in $\mathcal
C_0\setminus \{0\}$, there exist $-\tau<a<b<0$ and $\beta>0$ such
that
\begin{equation*}
\ep|\ln \ep|\phi(\theta)\geq \ep |\ln \ep|\beta \mathbf
1_{[a,b]}(\theta),\;\;\forall \theta\in [-\tau,0].
\end{equation*}
Arguing as in the proof of Lemma \ref{LE-Global-stable} and using
\eqref{rho}, we discover that there is $\zeta >0$ such that, for
$\ep
>0$ small enough,
\begin{equation}\label{CI-rho}
v_\ep(t):=v(t;\ep|\ln \ep|\phi)\geq \zeta \ep |\ln \ep|,\;\;
\forall t\in [\tau,2\tau].
\end{equation}
Next, observe that, for all $0< t\leq \tau$,
$$
\frac{d}{dt}\left(e^t v_\ep(t)\right)=e^tf(\ep|\ln \ep|\phi
(t-\tau))\leq e^\tau \ep |\ln \ep| \Vert \phi \Vert _\infty \Vert
f'\Vert _\infty =: C \ep |\ln \ep|.
$$
Integrating this from 0 to $\tau$, we have $ v_\ep(\tau) \leq
e^{-\tau}(\phi(0)+C\tau )\ep|\ln \ep| <\delta$, for $\ep >0$ small
enough.  Therefore we can define
\begin{equation*}
t^\ep:=\sup\left\{t> 2\tau :\;\;v_\ep(s-\tau)\leq
\delta,\;\;\forall s\in [2\tau,t]\right\}.
\end{equation*}
It then follows from the DDE \eqref{eq-delay} and \eqref{rho} that
\begin{equation}\label{super-solution-rho}
{v_\ep}'(t)\geq \rho v_\ep(t-\tau)-v_\ep(t),\;\; \forall
t\in[2\tau,t^\ep].
\end{equation}
Since $\rho>1$, there is  $a>0$ such that $a+1=\rho e^{-a\tau}$.
Then the map $h:t\mapsto A\ep |\ln \ep|e^{at}$, $A:=\zeta
/e^{2a\tau}$ satisfies
\begin{equation}\label{soussol-rho}
h'(t)= \rho h(t-\tau)-h(t),\;\;\forall t\in [2\tau,t^\ep], \text{
and } h(t)\leq \zeta \ep |\ln \ep|,\;\;t\in [\tau,2\tau].
\end{equation}
It follows from \eqref{super-solution-rho}, \eqref{CI-rho} and
\eqref{soussol-rho} that
\begin{equation*}
v_\ep(t)\geq A \ep |\ln\ep| e^{at},\;\; \forall t\in
[2\tau,t^\ep].
\end{equation*}
In view of $v_\ep(t^\ep-\tau)=\delta$, we have
\begin{equation}\label{temps}
t^\ep\leq \tau+\frac{1}{a}\ln \frac{\delta}{A\ep |\ln\ep|}.
\end{equation}
Now since the map $t\mapsto v_\ep(t)$ is increasing, we deduce
from $v_\ep(t^\ep -\tau)=\delta$ that
\begin{equation*}
v_\ep(t^\ep+t+\theta)\geq \delta,\;\; \forall (\theta,t)\in
[-\tau,0]\times[0,\infty).
\end{equation*}
In view of \eqref{temps}, we have $t^\ep \leq \alpha|\ln \ep|$ for
$\ep >0$ small enough so that
\begin{equation*}
v_\ep(\alpha|\ln \ep|+t+\theta)\geq \delta,\;\; \forall
(\theta,t)\in [-\tau,0]\times[0,\infty).
\end{equation*}
Since the semiflow associated with \eqref{eq-delay} is monotone
increasing on $\mathcal C _0$, we thus have
$$
0\leq 1-v_\ep(\alpha|\ln \ep|+t+\theta)\leq 1-V(\alpha|\ln
\ep|+t;\delta)(\theta),
$$
which combined with \eqref{expo-esti} yields, for $\ep >0$ small
enough,
$$
0\leq 1-v_\ep(\alpha|\ln \ep|+t+\theta)\leq Me^{-\lambda(\alpha
|\ln \ep|+t)}\leq M\ep^{\alpha\lambda}\leq \ep^{\alpha\lambda/2}.
$$
This completes the proof of Proposition \ref{PROP1}.
\end{proof}

\vskip 5pt

\noindent{\bf Derivatives of the semiflow.} Let us now provide
some estimates on the derivatives of the semiflow $V_\eta$ with
respect to the state variable. Our first result is a consequence
of the well-known differentiability result of semiflows generated
by delay differential equations (see for instance \cite{Hale93},
see also \cite{Thieme90} for results on abstract semilinear
problems with Hille-Yosida non-densely defined operator).

\begin{lem}[Derivatives]\label{LE-semiflow-diff}
For each $t> 0$, the map $\phi\in \mathcal C_0\mapsto
V_\eta(t;\phi)\in \mathcal C_0$ provided by Lemma
\ref{LE-semiflow} is of the class $C^2$. For each $\phi_0\in
\mathcal C_0$ and each $\phi\in\mathcal C_0$, the map
$t\in[0,\infty)\mapsto
\partial_\phi V_\eta(t;\phi_0)\cdot\phi\in \mathcal C_0$ is the mild solution of the
non-autonomous equation
\begin{equation}\label{eq-C1}
\begin{cases}
\displaystyle \frac{dv}{dt}(t)=L(t,\phi_0)v_t,\;\;t>0,\\
v(\theta)=\phi(\theta),\;\;\theta\in [-\tau,0],
\end{cases}
\end{equation}
wherein, for each $t> 0$, $L(t,\phi_0):\mathcal C_0\to \R$ is
defined by
\begin{equation}\label{def-L}
L(t,\phi_0)\phi
:={f_\eta}'\left(V_\eta(t;\phi_0)(-\tau)\right)\phi(-\tau)-\phi(0).
\end{equation}
Moreover, for each $\phi_0\in \mathcal C_0$ and each $\phi\in
\mathcal C_0$, the map $t\mapsto
\partial_{\phi,\phi}^2 V_\eta(t;\phi_0)\cdot(\phi,\phi)$ is the solution of
\begin{equation}\label{eq-C2}
\begin{cases}
\displaystyle \frac{dv}{dt}(t)=L(t,\phi_0)v_t+G(t;\phi_0;\phi),\;\;t>0,\\
v(\theta)=0,\;\;\theta\in [-\tau,0],
\end{cases}
\end{equation}
wherein the map $t\mapsto G(t;\phi_0;\phi)$ is defined by
\begin{equation}\label{def-G}
G(t;\phi_0;\phi):={f_\eta}''\left(V_\eta(t;\phi_0)(-\tau)\right)\left[\partial_\phi
V_\eta(t;\phi_0)\cdot \phi(-\tau)\right]^2.
\end{equation}
\end{lem}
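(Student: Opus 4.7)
The plan is to combine the standard smooth-dependence theory for retarded functional differential equations with the method of steps, and to identify the derivatives through the variation-of-constants formula. Since $f_\eta\in C^2(\mathbb R)$ by \eqref{function-f-modif}, the Nemytskii operator $\phi\mapsto f_\eta\circ\phi$ acts $C^2$-smoothly on $\mathcal C_0$. The general theory of $C^k$ semiflows generated by delay differential equations --- Hale \cite{Hale93}, together with the Hille--Yosida non-densely defined variant of Thieme \cite{Thieme90} that accommodates the shift structure underlying $\mathcal C_0$ --- then yields that, for each $t>0$, the map $\phi\mapsto V_\eta(t;\phi)$ is of class $C^2$ from $\mathcal C_0$ into itself. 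This disposes of the regularity statement.

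To identify $\partial_\phi V_\eta$ with the mild solution of \eqref{eq-C1}, I would start from the variation-of-constants representation
$$v_\eta(t;\phi)=e^{-t}\phi(0)+\int_0^t e^{-(t-s)} f_\eta\bigl(v_\eta(s-\tau;\phi)\bigr)\,ds,\qquad t\geq 0,$$
supplemented with $v_\eta(\theta;\phi)=\phi(\theta)$ on $[-\tau,0]$. Setting $w(t):=[\partial_\phi V_\eta(t;\phi_0)\cdot\phi](0)$ and differentiating in $\phi$ at $\phi_0$ in the direction $\phi$, I obtain
$$w(t)=e^{-t}\phi(0)+\int_0^t e^{-(t-s)} f_\eta'\bigl(v_\eta(s-\tau;\phi_0)\bigr)\,w(s-\tau)\,ds,$$
with $w(\theta)=\phi(\theta)$ on $[-\tau,0]$, which is precisely the mild form of \eqref{eq-C1}-\eqref{def-L}. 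A second differentiation --- the chain rule now producing an extra $f_\eta''\cdot w^2$ term --- gives, for $z(t):=[\partial_{\phi,\phi}^2 V_\eta(t;\phi_0)\cdot(\phi,\phi)](0)$, the identity
$$z(t)=\int_0^t e^{-(t-s)}\Bigl[f_\eta'\bigl(v_\eta(s-\tau;\phi_0)\bigr)\,z(s-\tau)+f_\eta''\bigl(v_\eta(s-\tau;\phi_0)\bigr)\,w(s-\tau)^2\Bigr]\,ds,$$
with $z(\theta)=0$ on $[-\tau,0]$, which is the mild form of \eqref{eq-C2}-\eqref{def-G}. Uniqueness of mild solutions for these non-autonomous linear DDEs then completes the identification.

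The only nontrivial point --- and thus the main technical obstacle --- is justifying the differentiation under the integral sign and the passage to the limit in the difference quotients at the level of the $\mathcal C_0$-valued history segments, rather than just at the head $\theta=0$. This I would handle by induction on the intervals $[k\tau,(k+1)\tau]$: on each step, the restriction of $v_\eta(\cdot;\phi)$ to $[k\tau,(k+1)\tau]$ is obtained from its history on $[(k-1)\tau,k\tau]$ by applying the $C^2$ Nemytskii operator associated with $f_\eta$ and convolving against $e^{-t}$, both being $C^2$ smoothing maps with bounds uniform on bounded subsets of $\mathcal C_0$. The chain rule applied at each step then delivers \eqref{eq-C1} and \eqref{eq-C2} on every finite time interval, and translating the resulting identities from the head of the segment to the full segment via the prescribed initial conditions concludes the proof.
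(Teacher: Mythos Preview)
Your proposal is correct and in fact goes beyond what the paper itself provides: the paper does not prove this lemma at all, but simply states it as ``a consequence of the well-known differentiability result of semiflows generated by delay differential equations'' and cites the same two references you invoke, namely \cite{Hale93} and \cite{Thieme90}. Your variation-of-constants computation and method-of-steps justification are exactly the mechanism underlying those cited results, so you have effectively supplied the details the paper omits.
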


Here is an estimate on the first derivative.

\begin{lem}[First derivative]\label{LE-der}
There exist constants $M^+>1$ and $\gamma^+>0$ such that, for all
$\phi_0\in\mathcal C_0$,
\begin{equation*}\label{esti-LE-der}
e^{-\tau} e^{-(t+\theta)}\leq \partial_\phi V_\eta (t;\phi_0)\cdot
1(\theta)\leq M ^+e^{\gamma^+(t+\theta)},\;\;\forall (\theta,t)\in
[-\tau,0]\times [0,\infty).
\end{equation*}
\end{lem}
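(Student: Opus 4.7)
The plan is to invoke Lemma \ref{LE-semiflow-diff} with the constant direction $\phi\equiv 1$. Writing $v(t):=\partial_\phi V_\eta(t;\phi_0)\cdot 1(0)$ and noting that $v_t(\theta)=v(t+\theta)=\partial_\phi V_\eta(t;\phi_0)\cdot 1(\theta)$, the equations \eqref{eq-C1}--\eqref{def-L} tell us that $v$ is the mild solution of the scalar non-autonomous linear delay equation
\[
v'(t)=a(t)v(t-\tau)-v(t),\quad t>0,\qquad v(\theta)=1,\;\;\theta\in[-\tau,0],
\]
with coefficient $a(t):={f_\eta}'\bigl(V_\eta(t;\phi_0)(-\tau)\bigr)$. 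Two structural properties of $a$ are the whole game: $a(t)\geq 0$ everywhere because $f_\eta$ is increasing, and $a(t)\leq K:=\|{f_\eta}'\|_{L^\infty(\R)}<\infty$ uniformly in $t$ and in $\phi_0\in\mathcal C_0$, which we may assume by construction of the bistable approximation $f_\eta$.

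For the lower bound, I would rewrite the equation in the integrating factor form $(e^t v(t))'=e^t a(t)\,v(t-\tau)$ and argue by induction on the intervals $[n\tau,(n+1)\tau]$, $n\geq 0$. On $[0,\tau]$ the right-hand side equals $e^t a(t)\geq 0$, so $e^t v(t)\geq v(0)=1$, hence $v(t)\geq e^{-t}>0$. If $v(s)\geq e^{-s}>0$ on $[-\tau,n\tau]$, then the right-hand side remains $\geq 0$ on $[n\tau,(n+1)\tau]$, which gives $e^t v(t)\geq e^{n\tau}v(n\tau)\geq 1$, and again $v(t)\geq e^{-t}$. Since $v\equiv 1\geq e^{-\tau}e^{-\theta}$ on $[-\tau,0]$, this yields
\[
v(t+\theta)\geq e^{-\tau}e^{-(t+\theta)},\quad\forall (\theta,t)\in[-\tau,0]\times[0,\infty).
\]

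For the upper bound, I would exploit the positivity of $v$ just obtained to drop the $-v(t)$ term: $v'(t)\leq Kv(t-\tau)$. Setting $M(t):=\sup_{-\tau\leq s\leq t}v(s)$, nondecreasing with $M(0)=1$, integration gives $v(t)\leq 1+K\int_0^t M(s)\,ds$, and taking the supremum on the left yields $M(t)\leq 1+K\int_0^t M(s)\,ds$. Gronwall's lemma then produces $M(t)\leq e^{Kt}$. Consequently $v(t+\theta)\leq e^{K(t+\theta)}$ whenever $t+\theta\geq 0$, while $v(t+\theta)=1\leq e^{K\tau}e^{K(t+\theta)}$ when $t+\theta\in[-\tau,0]$. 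In every case we obtain $v(t+\theta)\leq M^+ e^{\gamma^+(t+\theta)}$ with $\gamma^+:=K$ and $M^+:=e^{K\tau}+1>1$.

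The main conceptual point, rather than any computation, is the uniformity in $\phi_0\in\mathcal C_0$: since $\phi_0$ is arbitrary, one must rule out any dependence of $M^+,\gamma^+$ on the (possibly unbounded) orbit $V_\eta(\cdot;\phi_0)$. The argument works precisely because the only information about the orbit that feeds into the bounds is the pointwise two-sided estimate $0\leq a(t)\leq K$, which follows from the monotonicity and the uniformly bounded derivative of $f_\eta$, independently of $\phi_0$.
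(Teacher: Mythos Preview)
Your argument is correct and mirrors the paper's: both establish positivity of $v$ (you by induction via the integrating factor, the paper by invoking monotonicity of the semiflow directly), then obtain the lower bound from $v'(t)\geq -v(t)$ and the upper bound from the uniform estimate $0\leq f_\eta'\leq \tilde N$. The only minor difference is that for the upper bound the paper exhibits an explicit exponential supersolution $h(t)=e^{(\tilde N-1)\tau}e^{(\tilde N-1)t}$ of the full delay inequality $w'\leq \tilde N\,w(\cdot-\tau)-w$ (retaining the $-w$ term, which gives the slightly smaller exponent $\gamma^+=\tilde N-1$), whereas you drop $-v$ and run a Gronwall argument; both routes are standard and the distinction is cosmetic.
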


\begin{proof} Let $\phi_0\in\mathcal C_0$ be given. First,
the semiflow $V_\eta(t)$ being monotone increasing on $\mathcal C
_0$, observe that
\begin{equation}\label{eq-positif}
\partial_\phi V_\eta (t;\phi_0)\cdot 1(\theta)\geq 0,\;\;\forall (\theta,t)\in [-\tau,0]\times
[0,\infty).
\end{equation}
Hence, in view of \eqref{eq-C1} and \eqref{def-L}, the function
$w(t):=\partial_\phi V_\eta (t;\phi_0)\cdot 1(0)$ satisfies
\begin{equation*}
w'(t)\geq -w(t),\;\;\forall t\geq 0,
\end{equation*}
so that $w(t)\geq e^{-t}$, for all $t\geq 0$, which in turn
implies
$$
 \partial_\phi V_\eta (t;\phi_0)\cdot
1(\theta) \geq e^{-(t+\theta)},
$$
for all $(\theta,t)\in [-\tau,0]\times [0,\infty)$ such that
$t+\theta \geq 0$. For the remaining $(\theta,t)\in
[-\tau,0]\times [0,\infty)$ such that $t+\theta <0$, we have
$\partial_\phi V_\eta (t;\phi_0)\cdot 1(\theta)=1\geq e^{-(\tau +
t+\theta)}$. This completes the proof of the left-hand side of the
estimate of the lemma.

Next, choosing a constant $\tilde N>1$ such that
\begin{equation}\label{eq-sup-N}
0\leq {f_\eta}'(u)\leq \tilde N,\;\;\forall u\in\R,
\end{equation}
we infer from \eqref{eq-C1} and \eqref{def-L} that
\begin{equation}\label{supersol}
w'(t)\leq \tilde N w(t-\tau)-w(t),\;\;t>0, \text{ and }
w(\theta)=1,\;\;\theta\in [-\tau,0].
\end{equation}
Observe that the map $h:t\mapsto e^{(\tilde N-1)\tau}e^{(\tilde
N-1)t}$ satisfies
\begin{equation}\label{soussol}
h'(t)\geq \tilde N h(t-\tau)-h(t),\;\;t>0, \text{ and }
h(\theta)\geq 1,\;\;\theta\in [-\tau,0].
\end{equation}
It follows from \eqref{supersol} and \eqref{soussol} that
$w(t)\leq e^{(\tilde N-1)\tau}e^{(\tilde N-1)t}$, for all $t\geq
0$. Arguing as above we get the right-hand side of the estimate of
the lemma.\end{proof}

We pursue with the following estimate on the second derivative.

\begin{lem}[Second derivative]\label{LE-der-der}
There exist constants $K>0$ and $\mu>0$ such that, for all
$\phi_0\in\mathcal C_0$,
\begin{equation*}
\left|\partial_{\phi\phi}V_\eta(t;\phi_0)\cdot(1,1)(\theta)\right|\leq
K e^{\mu (t+\theta)}, \;\;\forall (\theta,t)\in [-\tau,0]\times
[0,\infty).
\end{equation*}
\end{lem}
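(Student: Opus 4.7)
By Lemma \ref{LE-semiflow-diff}, for the second derivative I may write $\partial_{\phi\phi}^2 V_\eta(t;\phi_0)\cdot(1,1)(\theta)=z(t+\theta)$, where $z:[-\tau,\infty)\to\R$ is the scalar solution of
\[
z'(t)=f_\eta'\bigl(V_\eta(t;\phi_0)(-\tau)\bigr)\,z(t-\tau)-z(t)+G(t;\phi_0;1),\quad t>0,
\]
with $z\equiv 0$ on $[-\tau,0]$ and $G(t;\phi_0;1)=f_\eta''\bigl(V_\eta(t;\phi_0)(-\tau)\bigr)\bigl[\partial_\phi V_\eta(t;\phi_0)\cdot 1(-\tau)\bigr]^2$. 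The plan is to control $G$ using Lemma \ref{LE-der}, then to run a scalar comparison argument on $z$, and finally to re-introduce the $\theta$ variable.

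The forcing term is easy to dominate: since $f_\eta''$ is bounded on $\R$ and Lemma \ref{LE-der} yields $\partial_\phi V_\eta(t;\phi_0)\cdot 1(-\tau)\leq M^+ e^{\gamma^+(t-\tau)}$, one immediately gets a constant $C>0$ (depending only on $f_\eta$, $\tau$, $M^+$, $\gamma^+$) such that $|G(t;\phi_0;1)|\leq C e^{2\gamma^+ t}$ for all $t\geq 0$. Using the variation of constants formula and the bound $0\leq f_\eta'\leq \tilde N$ from the proof of Lemma \ref{LE-der}, I would then obtain the scalar integral inequality
\[
|z(t)|\leq \int_0^t e^{-(t-s)}\bigl[\tilde N\,|z(s-\tau)|+C e^{2\gamma^+ s}\bigr]\,ds,\quad t\geq 0.
\]
A natural super-solution is $h(t):=Be^{\mu t}$. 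Inserting $h$ into the right-hand side and integrating gives the sufficient condition $B\bigl(1-\tilde N e^{-\mu\tau}/(\mu+1)\bigr)\geq C/(2\gamma^+ +1)$, provided $\mu\geq 2\gamma^+$. Choosing $\mu$ large enough so that $1-\tilde N e^{-\mu\tau}/(\mu+1)>0$ (any $\mu>\tilde N-1$ works) and then selecting $B$ accordingly, a bootstrap on the intervals $[0,\tau],[\tau,2\tau],\ldots$ yields $|z(t)|\leq B e^{\mu t}$ for every $t\geq 0$; on $[-\tau,0]$ the inequality is trivial as $z\equiv 0$.

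Coming back to the full statement, for any $(\theta,t)\in[-\tau,0]\times[0,\infty)$ this gives
\[
\bigl|\partial_{\phi\phi}^2 V_\eta(t;\phi_0)\cdot(1,1)(\theta)\bigr|=|z(t+\theta)|\leq B e^{\mu(t+\theta)},
\]
which is the desired bound with $K:=B$. Uniformity in $\phi_0$ is automatic, since $\tilde N$ and $\|f_\eta''\|_\infty$ depend only on $f_\eta$, and the constants $M^+,\gamma^+$ from Lemma \ref{LE-der} are themselves uniform in $\phi_0$. The only real subtlety is that two distinct exponential rates compete in the integral inequality---the rate $\tilde N$ coming from the delayed self-coupling, and the rate $2\gamma^+$ coming from the forcing $G$---but choosing $\mu$ large enough to dominate both resolves this at once.
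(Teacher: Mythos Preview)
Your proof is correct and follows essentially the same route as the paper: bound the forcing $G$ exponentially via Lemma \ref{LE-der}, use the uniform bound $0\leq f_\eta'\leq \tilde N$, and compare with an exponential super-solution $Be^{\mu t}$ chosen with $\mu$ large enough to beat both $2\gamma^+$ and $\tilde N-1$, then step through intervals of length $\tau$. The only presentational difference is that you pass to the variation-of-constants integral form and bound $|z(t)|$ directly, which handles the upper and lower estimates in one stroke; the paper instead works with the differential inequality and treats the two signs separately (upper bound via $f_\eta'\leq \tilde N$ and $G\leq Ae^{2\gamma^+(t-\tau)}$, lower bound via $f_\eta''\geq -C$).
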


\begin{proof}  In view of \eqref{def-G} and Lemma
\ref{LE-der}, there exists a constant $A>0$ such that, for all
$\phi_0\in \mathcal C_0$,
\begin{equation*}
|G(t;\phi_0;1)|\leq A e^{2\gamma^+(t-\tau)}, \;\;\forall t \geq 0.
\end{equation*}
Hence, the function
$w(t):=\partial_{\phi\phi}V_\eta(t;\phi_0)\cdot(1,1)(0)$ satisfies
\begin{equation}\label{eq-ineqa}
w'(t) \leq \tilde N w(t-\tau)-w(t)+A
e^{2\gamma^+(t-\tau)},\;\;t>0, \text{ and }
w(\theta)=0,\;\;\theta\in [-\tau,0].
\end{equation}
We look for a super-solution of \eqref{eq-ineqa} in the form
$t\mapsto \tilde K e^{\tilde \mu t}$, for some constants $\tilde
K>0$ and $\tilde \mu>0$ to be determined. This leads us to
\begin{equation}\label{esti1}
\tilde \mu\geq \tilde Ne^{-\tilde \mu\tau}-1+\frac{A}{\tilde K}
e^{-2\gamma ^+\tau +\left(2\gamma^+
-\tilde\mu\right)t},\;\;\forall t
> 0,
\end{equation}
which can be achieved by choosing $\tilde \mu>2\gamma^+$ and
$\tilde K>0$ both large enough. Arguing as in the proof of Lemma
\ref{LE-der}, we end up with constants $K>0$ and $\mu >0$ such
that, for all $\phi_0\in\mathcal C_0$, all $\theta \in[-\tau,0]$,
all $t\geq 0$,
\begin{equation*}
\partial_{\phi\phi}V_\eta\left(t;\phi_0\right)\cdot(1,1)(\theta)\leq K e^{\mu
(t+\theta)}.
\end{equation*}

Next, select $C>0$ such that ${f_\eta}''(u)\geq -C$, for all $u\in
\R$. Then we get $w'(t)\geq -C w(t-\tau)-w(t)-A
e^{2\gamma^+(t-\tau)}$, for which we can construct a sub-solution
$t\mapsto -\tilde K e^{\tilde \mu t}$ as above. This completes the
proof of the lemma.
\end{proof}

As a direct consequence of Lemma \ref{LE-der} and Lemma
\ref{LE-der-der}, we obtain the following estimate.

\begin{prop}[Estimate on derivatives]\label{LE-esti-der}
There exist constants $\widehat K>0$ and $\gamma>0$ such that, for
all $\phi_0\in\mathcal C_0$,
\begin{equation*}
|\partial_{\phi\phi} V_\eta (t;\phi_0)\cdot (1,1)(\theta)|\leq
\widehat{K} e^{\gamma t}
\partial_\phi V_\eta (t;\phi_0)\cdot 1(\theta),
\end{equation*}
for all $(\theta,t)\in [-\tau,0]\times [0,\infty)$.
\end{prop}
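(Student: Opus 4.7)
The proposition is stated as a direct consequence of Lemmas \ref{LE-der} and \ref{LE-der-der}, and indeed the plan is to simply combine the upper bound of the second lemma with the lower bound of the first. The only mild subtlety is that the two lemmas produce bounds of opposite exponential behavior in $t+\theta$: the second derivative is controlled by $K e^{\mu(t+\theta)}$, which decays as $\theta$ becomes more negative, whereas the first derivative is bounded below by $e^{-\tau} e^{-(t+\theta)}$, which \emph{grows} as $\theta$ becomes more negative. Hence forming the ratio yields an extra factor $e^{(\mu+1)(t+\theta)}$ which still must be controlled uniformly in $\theta\in[-\tau,0]$.

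The plan is therefore the following. First, write down the two pointwise estimates:
\begin{equation*}
|\partial_{\phi\phi} V_\eta(t;\phi_0)\cdot(1,1)(\theta)|\leq K\, e^{\mu(t+\theta)},
\qquad
\partial_\phi V_\eta(t;\phi_0)\cdot 1(\theta)\geq e^{-\tau}\, e^{-(t+\theta)},
\end{equation*}
valid for all $(\theta,t)\in[-\tau,0]\times[0,\infty)$ and all $\phi_0\in\mathcal C_0$, directly from Lemmas \ref{LE-der} and \ref{LE-der-der}. Dividing the first inequality by the (strictly positive) second one produces
\begin{equation*}
\frac{|\partial_{\phi\phi} V_\eta(t;\phi_0)\cdot(1,1)(\theta)|}{\partial_\phi V_\eta(t;\phi_0)\cdot 1(\theta)}
\;\leq\; K\,e^{\tau}\, e^{(\mu+1)(t+\theta)}.
\end{equation*}

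Since $\theta\leq 0$ and $\mu+1>0$, one has $(\mu+1)(t+\theta)\leq (\mu+1)t$, so the right-hand side is bounded by $K\,e^{\tau}\, e^{(\mu+1)t}$. Setting $\widehat K:=K\,e^{\tau}$ and $\gamma:=\mu+1$ yields the desired inequality uniformly in $\phi_0\in\mathcal C_0$.

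I do not expect any real obstacle here: everything is an immediate algebraic manipulation of two estimates already proved. The only care needed is to observe that the lower bound in Lemma \ref{LE-der} is \emph{strictly positive} so that the division is legitimate, and to absorb the dependence on $\theta\in[-\tau,0]$ using the sign of $\theta$ and the positivity of $\mu+1$.
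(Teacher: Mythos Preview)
Your proposal is correct and matches the paper's approach exactly: the paper states Proposition~\ref{LE-esti-der} as ``a direct consequence of Lemma~\ref{LE-der} and Lemma~\ref{LE-der-der}'' without further detail, and your argument is precisely the intended combination of the lower bound on the first derivative with the upper bound on the second. Your handling of the $\theta$-dependence via $\theta\le 0$ is the right way to absorb the extra factor.
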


\subsection{Construction of lower barriers for small times}\label{ss:construction}

We now provide an accurate lower estimate, for small times, of
$u^\ep:[-\ep\tau,\infty)\times \R^N\to [0,1]$ the solution of
\eqref{eq}--\eqref{eq-ID}.

\begin{prop}[Sub-solutions]\label{PROP2} Let the initial data $\varphi$ satisfy Assumption
\ref{ASS-initial} $(i)$. Then there exist $K>0$, $\alpha>0$ and
$\ep_0>0$ such that, for all $\ep\in \left(0,\ep_0\right)$,
\begin{equation*}
\max\left\{0\,;v_\eta\left(\frac{t}{\ep}; w_0(x)-\ep K\tau
-Kt\right)\right\}\leq u^\ep(t,x),
\end{equation*}
for all $(t,x)\in \left[-\ep\tau,\alpha \ep|\ln\ep|\right]\times
\R ^N$. Here,  $v_\eta= v_\eta(\cdot\,;\phi):[-\tau,\infty)\to\R$
denotes the solution of \eqref{eq-delay-modif} arising in Lemma
\ref{LE-semiflow} and the function $w_0$ is as in
\eqref{cond-ordre-1}.
\end{prop}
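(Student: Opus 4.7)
The plan is to seek a sub-solution of the form
\[
w^\ep(t,x) := v_\eta\!\left(\frac{t}{\ep};\,\phi_{t,x}\right),
\qquad
\phi_{t,x}(\theta):=w_0(x)-\ep K\tau-Kt\ \text{ for }\theta\in[-\tau,0],
\]
where $K>0$ is a constant to be fixed and $\phi_{t,x}\in\mathcal C_0$ is identified with the constant history equal to $w_0(x)-\ep K\tau-Kt$. For the initial comparison on $[-\ep\tau,0]\times\R^N$, observe that $t/\ep\in[-\tau,0]$ so $w^\ep(t,x)=\phi_{t,x}(t/\ep)=w_0(x)-\ep K\tau-Kt\leq w_0(x)\leq \varphi(t/\ep,x)=u^\ep(t,x)$ by \eqref{cond-ordre-1}. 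The core task is then to check the parabolic sub-solution inequality on $(0,\alpha\ep|\ln\ep|]\times\R^N$. Once that is done, the comparison principle (applied to \eqref{eq} in the spirit of Proposition \ref{LE-comparaison}, after the obvious adaptation to the operator $\partial_t-\ep\Delta+\tfrac{1}{\ep}$; note that $u^\ep\in[0,1]$ and $f_\eta|_{[0,1]}=f$, so $u^\ep$ also solves the equation with $f$ replaced by the increasing function $f_\eta$) yields $w^\ep\leq u^\ep$ on the whole strip; the claim then follows by taking positive parts, since $u^\ep\geq 0$.

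For the sub-solution inequality, set $s:=t/\ep$, $A:=\partial_\phi v_\eta(s;\phi_{t,x})\cdot\mathbf 1$ and $B:=\partial_{\phi\phi}v_\eta(s;\phi_{t,x})\cdot(\mathbf 1,\mathbf 1)$, where $\mathbf 1\in\mathcal C_0$ is the constant function equal to $1$. The chain rule together with the DDE \eqref{eq-delay-modif} yields
\[
\partial_t w^\ep=\tfrac{1}{\ep}\bigl[f_\eta\!\left(v_\eta(s-\tau;\phi_{t,x})\right)-v_\eta(s;\phi_{t,x})\bigr]-KA,\qquad
\ep\Delta_x w^\ep=\ep\,\Delta w_0(x)\,A+\ep\,|\nabla w_0(x)|^2 B.
\]
The sub-solution inequality is thus equivalent to
\[
\tfrac{1}{\ep}\bigl[f_\eta\!\left(v_\eta(s-\tau;\phi_{t,x})\right)-f_\eta\!\left(v_\eta(s-\tau;\phi_{t-\ep\tau,x})\right)\bigr]-KA-\ep\Delta w_0\,A-\ep|\nabla w_0|^2 B\leq 0.
\]
Since $\phi_{t-\ep\tau,x}-\phi_{t,x}\equiv\ep K\tau>0$, the monotonicity of the semiflow $V_\eta$ (Lemma \ref{LE-semiflow}) combined with that of $f_\eta$ implies that the bracketed ``delay'' term is $\leq 0$ and can therefore be discarded, reducing matters to the cleaner inequality $KA+\ep\Delta w_0\,A+\ep|\nabla w_0|^2 B\geq 0$.

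The main obstacle is controlling the potentially negative second-derivative term $\ep|\nabla w_0|^2 B$. Proposition \ref{LE-esti-der} is designed precisely for this, supplying the uniform bound $|B|\leq \widehat K e^{\gamma s}A$; combined with the strict positivity $A\geq e^{-\tau}e^{-s}>0$ from Lemma \ref{LE-der}, one obtains
\[
KA+\ep\Delta w_0\,A+\ep|\nabla w_0|^2 B\ \geq\ A\Bigl[K-\ep\|\Delta w_0\|_\infty-\widehat K\|\nabla w_0\|_\infty^{2}\,\ep\, e^{\gamma s}\Bigr].
\]
The time window $0\leq s\leq\alpha|\ln\ep|$ is precisely what yields $\ep e^{\gamma s}\leq\ep^{\,1-\gamma\alpha}$, so choosing $\alpha<1/\gamma$ the last term vanishes as $\ep\to 0^+$; picking then $K$ large enough and $\ep_0>0$ small enough keeps the bracket bounded below by, say, $K/2>0$, and since $A>0$ the desired sub-solution inequality holds. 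Together with the initial-history bound and the comparison argument described above, this yields $w^\ep\leq u^\ep$ on $[-\ep\tau,\alpha\ep|\ln\ep|]\times\R^N$, whence the claim.
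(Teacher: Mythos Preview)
Your proof is correct and follows essentially the same route as the paper: the same candidate sub-solution $v_\eta(t/\ep;\,w_0(x)-\ep K\tau-Kt)$, the same use of monotonicity of $V_\eta$ and $f_\eta$ to discard the delay mismatch, and the same control of the second derivative via Proposition~\ref{LE-esti-der} on a time window of order $\ep|\ln\ep|$. The only cosmetic difference is that the paper takes $\alpha=\gamma^{-1}$ (so that $\ep e^{\gamma s}\leq 1$) and absorbs the resulting constant into $K$, whereas you take $\alpha<\gamma^{-1}$ and let $\ep e^{\gamma s}\to 0$; either choice works.
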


\begin{proof}  Let us consider the differential operator
\begin{equation*}
\mathcal L^\ep _\eta [u](t,x):=\partial_t u(t,x)-\ep \Delta
u(t,x)-\frac{1}{\ep}\bigl[f_\eta\left(u(t-\ep\tau,x)\right)-u(t,x)\bigr].
\end{equation*}
Since $f_\eta=f$ on $[0,1]$, we have $\mathcal L^\ep _\eta
\left[u^\ep\right](t,x)\equiv 0$. We look for a sub-solution, at
least for small times, $\underline{u}:[-\ep\tau,\infty)\times
\R^N\to \R$ in the form
\begin{equation*}
\underline{u}(t,x):=v_\eta\left(\frac{t}{\ep};w_0(x)-\ep
K\tau-Kt\right).
\end{equation*}
Straightforward computations yield, for each $t>0$ and each
$x\in\R^N$,
\begin{eqnarray*}
&&\mathcal L^\ep _\eta
\left[\underline{u}\right](t,x)=-V^\ep(t,x)\left[K+\ep \Delta w_0(x)+\ep \frac{W^\ep(t,x)}{V^\ep (t,x)}|\nabla w_0(x)|^2\right]\\
&&+\frac{1}{\ep}\left[\left(\frac{dv_\eta}{dt}+v_\eta\right)\left(\frac{t}{\ep};w_0(x)-\ep
\tau-Kt\right)-f_\eta\left(v_\eta\left(\frac{t}{\ep}-\tau;w_0(x)-Kt\right)\right)\right]
\end{eqnarray*}
where
\begin{equation*}
\begin{split}
&V^\ep(t,x):=\partial_\phi V_\eta\left(\frac{t}{\ep};w_0(x)-\ep K\tau-Kt\right)\cdot 1(0)\,,\\
&W^\ep(t,x):=\partial_{\phi\phi} V_\eta \left(\frac{t}{\ep};
w_0(x)-\ep K\tau-Kt\right)\cdot (1,1)(0)\,.
\end{split}
\end{equation*}
Since the semiflow arising in Lemma \ref{LE-semiflow} is monotone
increasing in $\mathcal C_0$ and since $f_\eta$ is increasing, we
have
\begin{equation*}
\begin{split}
&\left(\frac{d v_\eta}{dt}+v_\eta\right)\left(\frac{t}{\ep};w_0(x)-\ep K\tau-Kt\right)-f_\eta\left(v_\eta\left(\frac{t}{\ep}-\tau;w_0(x)-Kt\right)\right)\\
&\leq
\left(\frac{dv_\eta}{dt}+v_\eta\right)\left(\frac{t}{\ep};w_0(x)-\ep
K\tau-Kt\right)-f_\eta\left(v_\eta\left(\frac{t}{\ep}-\tau;w_0(x)-\ep
K \tau-Kt\right)\right)\\
&=0,
\end{split}
\end{equation*}
since $v_\eta$ solves \eqref{eq-delay-modif}. Hence, using
Proposition \ref{LE-esti-der}, we get, for all $\ep \in(0,1)$,
$t>0$, $x\in\R^N$,
\begin{equation*}
\mathcal L^\ep _\eta \left[\underline{u}\right](t,x)\leq
-V^\ep(t,x)\left[K-\ep \|\Delta w_0\|_\infty-\ep \|\nabla
w_0\|_\infty^2\widehat{K} e^{\gamma \frac{t}{\ep}}\right].
\end{equation*}
Looking at small times, the above implies, for all $\ep \in(0,1)$,
$t\in \left(0,\gamma ^{-1}\ep|\ln\ep|\right)$, $x\in\R^N$,
\begin{equation*}
\mathcal L^\ep _\eta\left[\underline{u}\right](t,x)\leq
-V^\ep(t,x)\left[K-\ep \|\Delta w_0\|_\infty-\|\nabla
w_0\|_\infty^2\widehat{K}\right]\leq 0,
\end{equation*}
if $K>0$ is sufficiently large. Next, concerning initial data, we
have,  for all $\theta\in [-\ep\tau,0]$,
\begin{equation*}
\underline{u}(\theta,x)=w_0(x)-\ep K\tau-K\theta\leq w_0(x)\leq
\varphi\left(\frac{\theta}{\ep},x\right)=u^\ep(\theta,x),
\end{equation*}
where we have used \eqref{cond-ordre-1} and \eqref{eq-ID}. The
comparison principle in Proposition \ref{LE-comparaison} thus
implies that
 $$
 \underline{u}(t,x)\leq u^\ep(t,x),\;\; \forall (t,x) \in
\left[-\ep\tau,\gamma ^{-1}\ep|\ln\ep|\right]\times\R^N. $$
Recalling that $u^\ep\geq 0$, this completes the proof of
Proposition \ref{PROP2}.
\end{proof}

\vskip 5pt

\noindent{\bf Proof of Proposition \ref{PROP3}.} Fix $K>0$ and
$\alpha >0$ as in Proposition \ref{PROP2}. Define $\alpha
_0:=\alpha /2$. For $\phi:\equiv \alpha _0 \in \mathcal C _0
\setminus \{0\}$, let us select $\lambda >0$ as in Proposition
\ref{PROP1} and define $\rho _0:=\alpha _0 \lambda /2$. Also, it
follows from Assumption \ref{ASS-initial} $(ii)$ that there exists
$\delta _0 >0$ such that, for $\ep >0$ small enough,
\begin{equation}
d(0,x)\leq -\delta _0 \ep|\ln \ep| \Longrightarrow w_0(x)\geq 4
\alpha _0 \ep |\ln \ep|.
\end{equation}
Now, for any $-\tau \leq \theta \leq 0$, define $s:=\alpha _0
\ep|\ln \ep|+\ep \tau +\ep \theta$ and take $x$ such that
$d(0,x)\leq -\delta _0 \ep |\ln \ep|$. Since, for $\ep >0$ small
enough, $0\leq s\leq \alpha \ep|\ln \ep|$ and $w_0(x)-\ep K \tau
-K s\geq \alpha _0 \ep|\ln \ep|$, we deduce from Proposition
\ref{PROP2} and Proposition \ref{PROP1} that
$$
u^\ep (s,x)\geq v_\eta (\alpha _0 |\ln \ep|+\tau +\theta; \alpha
_0 \ep |\ln \ep|)\geq 1-\ep ^{\rho _0},
$$
which concludes the proof.\qed

\section{Lower barriers via bistable approximation}\label{s:motion}

 As explained before, our analysis of the
propagation of interface from below is performed by approximating
the monostable function $f$ in a bistable manner (see subsection
\ref{ss:approx}). We start with some preliminaries on smooth
signed distance functions associated with a family of free
boundary problems.

\subsection{Smooth cut-off signed distance functions}\label{ss:distance}

For $c>0$, we denote by $\Gamma ^c:=\bigcup _{t\geq 0} (\{t\}
\times \Gamma^c_t)$ the smooth solution of the free boundary
problem
\[
 (P^c)\quad\begin{cases}
 \, V=c
 \quad \text { on } \Gamma ^c _t \vspace{3pt}\\
 \, \Gamma ^c _t\big|_{t=0}=\Gamma_0,
\end{cases}
\]
where $V$ denotes the normal velocity of $\Gamma ^c_t$ in the
exterior direction. Note that since the region enclosed by $\Gamma
_0$, namely $\Omega _0$, is convex, these solutions do exist for
all $t\geq 0$. Also we can naturally, i.e. in a {\it reversible}
manner, extend these solutions for small negative times by letting
$\Gamma _0$ evolve with speed $-c $. Hence, with a slight abuse of
notation, we consider $\Gamma ^c _t$ for all $t\geq -\ep \tau$,
with $\ep>0$ small enough. For each $t\geq -\ep \tau$, we denote
by $\Omega ^c_t$ the region enclosed by the hypersurface $\Gamma
^c_t$.

Let $\widetilde d$ be the signed distance function to $\Gamma ^c$
defined by
\begin{equation}\label{eq:dist}
\widetilde d (t,x):=
\begin{cases}
-&\hspace{-10pt}\mbox{dist}(x,\Gamma ^c_t)\quad\text{ for }x\in\Omega ^c _t \\
&\hspace{-10pt} \mbox{dist}(x,\Gamma ^c_t) \quad \text{ for }
x\in\R ^N \setminus \Omega ^c _t,
\end{cases}
\end{equation}
where $\mbox{dist}(x,\Gamma ^c_t)$ is the distance from $x$ to the
hypersurface  $\Gamma ^c _t$. We remark that $\widetilde d=0$ on
$\Gamma ^c$ and that $|\nabla \widetilde d|=1$ in a neighborhood
of $\Gamma ^c$.

We now introduce the ``cut-off signed distance function" $d$,
which is defined as follows. Let $T>0$ be given. First, choose
$d_0>0$ small enough so that $\widetilde d$ is smooth in the
tubular neighborhood of $\Gamma ^c$
\[
 \{(t,x) \in [-\ep \tau,T] \times \R ^N:\;\;|\widetilde{d}(t,x)|<3
 d_0\}.
\]
Next let $\zeta(s)$ be a smooth increasing function on $\R$ such
that
\[
 \zeta(s)= \left\{\begin{array}{ll}
 s &\textrm{ if }\ |s| \leq d_0\vspace{4pt}\\
 -2d_0 &\textrm{ if } \ s \leq -2d_0\vspace{4pt}\\
 2d_0 &\textrm{ if } \ s \geq 2d_0.
 \end{array}\right.
\]
We then define the cut-off signed distance function $d$ by
\begin{equation}
d(t,x):=\zeta\left(\tilde{d}(t,x)\right).
\end{equation}

Note that
\begin{equation}\label{norme-un}
\text{ if } \quad |d(t,x)|< d_0 \quad \text{ then }\quad |\nabla
d(t,x)|=1,
\end{equation}
and that the equation of motion $(P^c)$ yields
\begin{equation}\label{interface}
\text{ if } \quad |d(t,x)|< d_0 \quad \text{ then }\quad
 \partial _t d(t,x)+c=0.
\end{equation}
Then the mean value theorem provides a constant $\bar N>0$ such
that
\begin{equation}\label{MVT}
|\partial _t d(t,x)+c|\leq  \bar N|d(t,x)| \quad \textrm{ for all
} (t,x) \in [-\ep \tau,T]\times \R ^N.
\end{equation}
Moreover, there exists a constant $C>0$ such that
\begin{equation}\label{est-dist}
|\nabla d (t,x)|+|\Delta d (t,x)|\leq C\quad \textrm{ for all }
(t,x) \in [-\ep \tau,T]\times \R ^N.
\end{equation}

\subsection{Construction of lower barriers}

Let us recall that $\{f_\eta\}_{\eta\in(0,1]}$ denotes a family of
bistable approximations of $f$ such that \eqref{function-f-modif}
and \eqref{function-f-order} hold. Also, for $\eta\in(0,1]$,
$(U_\eta,c_\eta)$ denotes the travelling wave solution (with time
delay) associated with this bistable $f_\eta$ (see Lemma
\ref{LE-bistable}), namely
\begin{equation}\label{eq-phi}
\left\{\begin{array}{ll}
{U_\eta}''(z)+c_\eta {U_\eta}'(z)+f_\eta\left(U_\eta(z+c_\eta\tau)\right)-U_\eta(z)=0,\;\;\forall z\in\R,\vspace{2pt}\\
U_\eta(-\infty)=1,\quad U_\eta (0)=0,\quad U_\eta(\infty)= -\eta.
\end{array} \right.
\end{equation}

\bigskip

In the spirit of the sub-solutions constructed in \cite{A-Hil-Mat}
for bistable systems, we look for sub-solutions $u_\eta^-$ in the
form
\begin{equation}\label{sub}
u_\eta^-(t,x):=U_\eta \left(\frac{d_\eta (t,x)+\ep |\ln \ep|
p(t)}{\ep}\right)-q(t),
\end{equation}
where
\begin{eqnarray}
p(t)&:=&-\EB+e^{Lt}+ K,\vspace{3pt} \label{def-p}\\
q(t)&:=&\sigma \left(\beta \EB+\ep Le^{Lt}\right).\label{def-q}
\end{eqnarray}
Here, $\sigma$, $\beta$, $L$ and $K$ are positive constants to be
determined, and $d_\eta(t,x)$ denotes the cut-off signed distance
function to the interface starting from $\Gamma _0$ and evolving
with speed $c_\eta$, that is the solution of $(P^{c_\eta})$. As
seen in the previous subsection, this allows to define $u_\eta ^-$
for all $t\geq -\ep \tau$, $x\in \R ^N$.

\begin{prop}[Sub-solutions]\label{lem:sous-sol-motion} One can find positive
constants $\beta$, $\sigma$ and $L$ such that, for all $K>1$, the
function $u^- _\eta$ satisfies, for $\ep >0$ small enough,
$$
\ep\mathcal L ^\ep _\eta [u_\eta ^-](t,x)=\ep \partial _t u_\eta
^-(t,x)-\ep^2 \Delta u_\eta ^- (t,x)-f_\eta\left(u_\eta ^-(t-\ep
\tau,x)\right)+u_\eta ^-(t,x)\leq 0,
$$
for all $t> 0$, $x\in \R^N$.
\end{prop}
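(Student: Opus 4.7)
The plan is to compute $\ep\mathcal L^\ep_\eta[u_\eta^-](t,x)$ explicitly, write it as a sum of a principal term which vanishes thanks to the travelling wave profile equation for $(U_\eta,c_\eta)$, plus error terms which will be absorbed either by the $\EB$--piece of $p,q$ (for small times $t\lesssim \ep|\ln\ep|$, or more precisely wherever $\EB$ is not already exponentially small) or by the $e^{Lt}$--piece (for later times). The whole computation is then split into two regimes according to whether $|d_\eta(t,x)|<d_0$ or $|d_\eta(t,x)|\geq d_0$, and in the inner region one has to handle the delay in $d_\eta$ carefully.

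\textbf{Main computation.} Set $z=z(t,x):=\frac{d_\eta(t,x)+\ep|\ln\ep|p(t)}{\ep}$. A direct differentiation gives
\begin{equation*}
\ep\partial_t u_\eta^- - \ep^2\Delta u_\eta^- = U_\eta'(z)\bigl(\partial_t d_\eta +\ep|\ln\ep|p'(t)-\ep\Delta d_\eta\bigr) - U_\eta''(z)|\nabla d_\eta|^2 -\ep q'(t).
\end{equation*}
For the delayed argument, I would first estimate
\begin{equation*}
z(t-\ep\tau,x) = z(t,x) + c_\eta\tau + \ep^{-1}R^\ep(t,x),
\end{equation*}
where $R^\ep(t,x):=d_\eta(t-\ep\tau,x)-d_\eta(t,x)+c_\eta\ep\tau+\ep^2|\ln\ep|(p(t-\ep\tau)-p(t))$. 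Using \eqref{MVT} on the distance and Taylor expanding $p$, one checks that $|R^\ep|=O(\ep^2|d_\eta|)+O(\ep^2|\ln\ep|\EB)+O(\ep^3|\ln\ep|e^{Lt})$. Then I would Taylor expand $U_\eta$ and $f_\eta$ so that
\begin{equation*}
f_\eta\bigl(U_\eta(z(t-\ep\tau,x))-q(t-\ep\tau)\bigr)
= f_\eta\bigl(U_\eta(z+c_\eta\tau)\bigr) + \mathcal E_1,
\end{equation*}
where $\mathcal E_1$ contains the error from $R^\ep$ and from the $-q(t-\ep\tau)$ shift, and can be controlled using the uniform bounds on $U_\eta', U_\eta''$ provided by Lemma~\ref{LE-bistable}(ii).

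\textbf{Interior region $|d_\eta|<d_0$.} Here \eqref{norme-un} and \eqref{interface} give $|\nabla d_\eta|=1$ and $\partial_t d_\eta=-c_\eta$, so the travelling wave equation \eqref{eq-phi} wipes out the principal contribution $-c_\eta U_\eta'(z)-U_\eta''(z)+f_\eta(U_\eta(z+c_\eta\tau))-U_\eta(z)$. What remains is
\begin{equation*}
\ep\mathcal L^\ep_\eta[u_\eta^-]
= \ep|\ln\ep|p'(t)U_\eta'(z)-\ep U_\eta'(z)\Delta d_\eta-\ep q'(t)+q(t)-f_\eta'\bigl(U_\eta(z+c_\eta\tau)\bigr)\cdot(-q(t-\ep\tau)) + \mathcal E,
\end{equation*}
where $\mathcal E$ collects the higher-order errors. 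The linearization of $f_\eta$ at $\pm\eta,1$ satisfies $f_\eta'<1$ by \eqref{function-f-modif}, and for $z+c_\eta\tau$ close to $0$ the derivative $U_\eta'(z+c_\eta\tau)$ is bounded below away from $0$ on compact intervals. Using this, I would choose $\beta>0$ so small that the linear differential inequality satisfied by the $\EB$--terms of $q$ is strictly negative (this is the standard bistable computation of \cite{A-Hil-Mat} adapted to the delay), then choose $\sigma$ large enough to beat the $\ep\|\Delta d_\eta\|_\infty$--error from \eqref{est-dist}, and finally $L$ so that the $e^{Lt}$--pieces also win.

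\textbf{Outer region $|d_\eta|\geq d_0$.} Here $z$ has large absolute value and Lemma~\ref{LE-bistable}(ii) yields $|U_\eta'(z)|+|U_\eta''(z)|\leq Me^{-\mu d_0/\ep}$, which is super-polynomially small. Thus the derivative terms in $\ep\mathcal L^\ep_\eta[u_\eta^-]$ are negligible and the sign is determined by $-f_\eta(u_\eta^-(t-\ep\tau,x))+u_\eta^-(t,x)$. Since $U_\eta(z)$ is within $Me^{-\mu|z|}$ of either $1$ or $-\eta$, and $f_\eta'<1$ at both stable equilibria, a direct first-order expansion shows this quantity is bounded above by $-cq(t)+o(\ep)$ for some $c>0$, which is again negative for $\sigma$ large.

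\textbf{Main obstacle.} I expect the main difficulty to be the careful bookkeeping of the delay correction: the term $d_\eta(t-\ep\tau,x)$ produces a shift by $c_\eta\tau$ in the wave argument (which is exactly what the travelling wave equation asks for), but the residual $R^\ep/\ep$ and the non-alignment of $p(t-\ep\tau)$ with $p(t)$ create error terms involving both $\EB$ and $e^{Lt}$; the choice of $\beta, L, \sigma$ has to be done in the right order so that each error is absorbed by the corresponding exponential piece of $q$. The time extension back to $t\geq -\ep\tau$, available thanks to the reversible extension of $d_\eta$ discussed in Section~\ref{ss:distance}, is what makes the delayed evaluation meaningful in the range considered.
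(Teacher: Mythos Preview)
Your overall structure is right, but there is a genuine gap in how you handle the delay, and a secondary mistake in the choice of $\sigma$.

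\medskip
\noindent\textbf{The delay remainder is not small.} Your definition of $R^\ep$ is slightly off (the sign in front of $c_\eta\ep\tau$ and the power of $\ep$ in front of $|\ln\ep|$ are wrong); the correct identity is
\[
z(t-\ep\tau,x)=z(t,x)+c_\eta\tau+\ep^{-1}R^\ep,\qquad
R^\ep=d_\eta(t-\ep\tau,x)-d_\eta(t,x)-c_\eta\ep\tau+\ep|\ln\ep|\bigl(p(t-\ep\tau)-p(t)\bigr).
\]
Using \eqref{MVT} the distance part is $O(\ep)$ (and vanishes in the inner region), but the $p$-part equals $\ep|\ln\ep|\EB(1-e^{\beta\tau})+\ep|\ln\ep|e^{Lt}(e^{-L\ep\tau}-1)$. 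Hence
\[
\ep^{-1}R^\ep \;=\; O(1)\text{ (outer only)} \;+\;|\ln\ep|\EB(1-e^{\beta\tau})\;+\;O\bigl(\ep|\ln\ep|e^{Lt}\bigr),
\]
and the middle term is of order $|\ln\ep|$ near $t=0$, \emph{not} small. So a straightforward Taylor expansion of $U_\eta$ at $z+c_\eta\tau$ with increment $\ep^{-1}R^\ep$ does not give a controllable error, and your claimed orders $O(\ep^2|d_\eta|)+O(\ep^2|\ln\ep|\EB)+O(\ep^3|\ln\ep|e^{Lt})$ are each too small by a factor $\ep$.

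The paper avoids this entirely by \emph{monotonicity}: since $p$ is increasing and $U_\eta$ is decreasing, one has directly
\[
u_\eta^-(t-\ep\tau,x)\;\geq\;U_\eta\bigl(z+c_\eta\tau+\Theta_\ep(t,x)\bigr)-q(t-\ep\tau),
\]
and then the increase of $f_\eta$ turns this into the needed upper bound on $-f_\eta(u_\eta^-(t-\ep\tau,x))$. No smallness of the $p$-shift is required; its sign is what matters. Once you insert this inequality, the principal terms cancel via \eqref{eq-phi} and one is left with exactly the $E_1$, $E_2$, $E_3$ of the paper. (There is also a sign slip in your displayed remainder: the $q$-contribution should be $q(t-\ep\tau)f_\eta'(\theta)-q(t)$, not $+q(t)+f_\eta'(\cdot)q(t-\ep\tau)$.)

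\medskip
\noindent\textbf{The role of $\sigma$.} You propose to take $\sigma$ large to absorb the $\ep\|\Delta d_\eta\|_\infty$ error. This is backwards: $\sigma$ must be taken \emph{small} so that the mean-value point $\theta$ lying between $U_\eta(z+c_\eta\tau+\Theta_\ep)-q(t-\ep\tau)$ and $U_\eta(z+c_\eta\tau)$ stays in a neighbourhood of $-\eta$ or $1$ when $|z|$ is large, which is where one uses $f_\eta'<1$ to get $e^{\beta\tau}f_\eta'(\theta)-1+\beta\leq-\beta$. The $\Delta d_\eta$ error (of size $O(\ep)$) is instead absorbed by choosing $L$ large, since the $E_1$-analysis yields $E_1\leq -\ep\sigma\beta L$. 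Your outer-region sketch is essentially correct in spirit, but note that the same $E_1$-estimate (now via $f_\eta'(\theta)<1$ near the equilibria) already produces the needed $-\ep\sigma\beta L$ there; the derivative terms $E_2,E_3$ are then exponentially small by Lemma~\ref{LE-bistable}~(ii).
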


\begin{proof} For ease of notation, we drop most of the subscripts $\eta$.
Also we define
\begin{equation}\label{def-argument}
z:=\frac{d(t,x)+\ep |\ln \ep|p(t)}{\ep}.
\end{equation}
We start by evaluating $\ep \mathcal L ^\ep _\eta [u^-](t,x)$. We
compute
\begin{eqnarray*} \ep
\partial _t u^-(t,x)&=&(\partial
_t d(t,x)+\ep|\ln \ep|p'(t))U'(z)-\ep q'(t)\\
\ep^2 \Delta u^- (t,x)&=&|\nabla d|^2(t,x)U''(z)+\ep\Delta d(t,x)
U'(z).\end{eqnarray*} Next, observe that the previous subsection
enables to write
$$
d(t-\ep \tau,x)=d(t,x)+\ep c\tau+\ep \Theta _\ep(t,x),
$$
where the correction $\Theta _\ep$ vanishes close to the interface
and is $\mathcal O (1)$:
\begin{equation}\label{varphi}
\Theta _\ep(t,x)=0\quad\text{ if }|d(t,x)|\leq d_0, \quad \Vert
\Theta _\ep \Vert _{L^\infty}\leq A,
\end{equation}
for some constant $A>0$. Hence, since $p(t)$ increases and $U(z)$
decreases, we have
\begin{eqnarray*}
u^-(t-\ep\tau,x)&=&U\left(\frac{d(t,x)+\ep |\ln \ep|p(t-\ep
\tau)}{\ep}+c\tau+\Theta
_\ep(t,x)\right)-q(t-\ep \tau)\\
&\geq&U\left(\frac{d(t,x)+\ep |\ln \ep|p(t)}{\ep}+c\tau+\Theta
_\ep(t,x)\right)-q(t-\ep \tau).
\end{eqnarray*}
 Since $f$ is increasing we get
\begin{eqnarray*}
f\left(u^-(t-\ep\tau,x)\right)&\geq&
f\Big(U\left(z+c\tau+\Theta_\ep(t,x)\right)-q(t-\ep \tau)\Big)\\
&=&f\Big(U(z+c\tau+\Theta_\ep(t,x))\Big)-q(t-\ep \tau)f'(\theta),
\end{eqnarray*}
for some $U(z+c\tau+\Theta_\ep(t,x))-q(t-\ep \tau)\leq \theta \leq
U(z+c\tau)$. Hence, we have
\begin{eqnarray*}
f\left(u^-(t-\ep\tau,x)\right)&\geq&
f\left(U(z+c\tau)\right)-q(t-\ep \tau)f'(\theta)\\
&&+\Theta _\ep(t,x)(f \circ U)'\left(z+c\tau+\omega
\Theta_\ep(t,x)\right),
\end{eqnarray*}
for some $0\leq\omega \leq 1$. Combining the above estimates with
$U''(z)+cU'(z)+f(U(z+c\tau))-U(z)=0$, we obtain $\ep \mathcal L
^\ep _\eta [u^-] (t,x)\leq E_1+E_2+E_3$ where
\begin{eqnarray*}
E_1&:=& \ep|\ln \ep|p'(t)U'(z)+q(t-\ep \tau)f'(\theta)-q(t)-\ep q'(t)\\
E_2&:=&(\partial _td(t,x)+c-\ep \Delta d(t,x))
U'(z)+\left(1-|\nabla d(t,x)|^2\right) U''(z)\\
E_3&:=&-\Theta _\ep (t,x)(f \circ U)'\left(z+c\tau+\omega
\Theta_\ep(t,x)\right).
\end{eqnarray*}

\medskip

Let us now analyze further the term $E_1$. By using the
expressions \eqref{def-p}, \eqref{def-q} for $p$ and $q$ we obtain
\begin{eqnarray*}
E_1&=&\beta \EB\left(|\ln \ep|U'(z)+\sigma(e^{\beta
\tau}f'(\theta)-1+\beta)\right)\\
&&+\ep L e^{Lt}\left(|\ln \ep|U'(z)+\sigma(e^{-\ep
L\tau}f'(\theta)-1-\ep L)\right)\\
&=:& \beta \EB I_1+\ep L e^{Lt} I_2.
\end{eqnarray*}
Since $f'(-\eta)<1$ and $f'(1)<1$, we can fix small $a>0$ and
$\beta >0$ such that
$$
e^{\beta \tau}f'(u)-1+\beta\leq -\beta,\quad \forall u\in
[-\eta-a,-\eta+a]\cup [1-a,1+a].
$$
In view of $U(-\infty)=1$, $U(\infty)=-\eta$ and inequality
$U(z+c\tau+\Theta_\ep(t,x))-q(t-\ep \tau)\leq \theta \leq
U(z+c\tau)$, there exists a large $z_0$ such that $\theta \in
[-\eta-a,-\eta+a]\cup [1-a,1+a]$ as soon as $|z|\geq z_0$ (by
choosing $\sigma$ small enough to control the $-q(t-\ep \tau)$
term) and the above inequality applies for $s=\theta$. It follows
from $U'(z)\leq 0$ that $I_1 \leq -\sigma \beta$ in the region
$\{|z|\geq z_0\}$. In the compact region $\{|z|\leq z_0\}$, we
have $U'(z)\leq -b$ for some $b>0$ so that $I_1\leq -b|\ln \ep|+C$
so that $I_1\leq -\sigma \beta$ also holds true. The same argument
yields $I_2\leq -\sigma \beta$. Hence
$$
E_1\leq -\sigma \beta ^2 \EB-\ep \sigma \beta L e^{Lt}\leq -\ep
\sigma \beta L.
$$

\medskip

We now conclude the proof of $\ep \mathcal L ^\ep _\eta
[u^-](t,x)\leq 0$. Assume first that $(t,x)$ lies in the tubular
neighborhood $\{|d(t,x)|\leq d_0\}$ of $\Gamma _t$. In view of
\eqref{norme-un} and \eqref{interface}, the term $E_2$ reduces to
$-\ep \Delta d(t,x)U'(z)$. In view of \eqref{varphi}, the term
$E_3$ vanishes. As a result,
$$
\ep \mathcal L^\ep _\eta [u^-](t,x)\leq -\ep \sigma \beta L +\ep
\Vert \Delta d\Vert _{L^\infty} \Vert U'\Vert _{L^\infty(\R)}\leq
0,
$$
if $L>0$ is large enough. Next, if $(t,x)$ is such that
$|d(t,x)|\geq d_0$ then we shall use the exponential decay of the
derivatives of $U$ --- see Lemma \ref{LE-bistable} $(ii)$--- to
control $E_2$ and $E_3$. Indeed in this region, the argument $z$
defined in \eqref{def-argument} satisfies $|z|\geq d_0/(2\ep)$.
Hence, combining the exponential decay of $U'$ and $U''$ with
\eqref{MVT} and \eqref{est-dist}, we get a bound $|E_2|\leq C_2
e^{-C_2\frac{d_0}{2\ep}}$, for some $C_2>0$. Also, it follows from
\eqref{varphi} that $$ |z+c\tau+\omega \Theta _\ep(t,x)|\geq
\frac{d_0}{2\ep}-c\tau -\omega A\geq \frac{d_0}{4\ep},
$$
which in turn provides a bound $|E_3|\leq
C_3e^{-C_3\frac{d_0}{4\ep}}$, for some $C_3>0$. As a result we
collect, for a constant $C>0$,
$$
\ep \mathcal L^\ep _\eta [u^-](t,x)\leq -\ep \sigma \beta L
+Ce^{-C\frac{d_0}{4\ep}}\leq 0,
$$
if $\ep>0$ is small enough. This completes the proof of the lemma.
\end{proof}

In order to apply the comparison principle, we need the following
estimate.

\begin{lem}[Ordering initial data]\label{lem:initialiser} One can find $K>1$ such that, for $\ep >0$ small enough,
$$
u_\eta ^-(t,x)\leq u^\ep(t+\alpha _0 \ep|\ln \ep|+\ep
\tau,x),\quad \text{ for all } -\ep \tau \leq t \leq 0,\; x\in
\R^N,
$$
where $\alpha _0\ep|\ln \ep|$ denotes the \lq\lq generation of
interface from below time'' appearing in Proposition \ref{PROP3}.
\end{lem}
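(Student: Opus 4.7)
The plan is to split $\R^N$ into two regions according to whether $x$ lies \emph{deep} inside $\Omega_0$ or not, and to choose $K$ large enough so that in the second region the argument of $U_\eta$ tends to $+\infty$ as $\ep\to 0^+$. Throughout, I set $s:=t+\alpha_0\ep|\ln\ep|+\ep\tau$ and $\theta:=t/\ep\in[-\tau,0]$, so Proposition~\ref{PROP3} directly bears on $u^\ep(s,x)=u^\ep(\alpha_0\ep|\ln\ep|+\ep\tau+\ep\theta,x)$. Some preliminary estimates on $p$, $q$ and $d_\eta$ will be needed: for $t\in[-\ep\tau,0]$ one has $\EB\in[1,e^{\beta\tau}]$ and $e^{Lt}\in[e^{-L\ep\tau},1]$, so that $p(t)\geq K-e^{\beta\tau}$ and $q(t)\geq \sigma\beta$; moreover, since $d_\eta$ is Lipschitz in time near $\Gamma^{c_\eta}$ (and bounded off it), a uniform bound $|d_\eta(t,x)-d(0,x)|\leq C_0\,\ep$ holds on $[-\ep\tau,0]\times\R^N$.

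In the first region, $d(0,x)\leq -\delta_0\,\ep|\ln\ep|$, Proposition~\ref{PROP3} gives $u^\ep(s,x)\geq 1-\ep^{\rho_0}$. On the other hand, using $U_\eta\leq 1$ and $q(t)\geq\sigma\beta$, we have $u_\eta^-(t,x)\leq 1-\sigma\beta$. Since $\sigma\beta$ is a fixed positive constant and $\ep^{\rho_0}\to 0$, the inequality $u_\eta^-(t,x)\leq 1-\ep^{\rho_0}\leq u^\ep(s,x)$ follows for every $\ep$ small enough, \emph{regardless of the value of $K$}.

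In the complementary region, $d(0,x)> -\delta_0\,\ep|\ln\ep|$, the plan is to drive the argument of $U_\eta$ to $+\infty$. Indeed, combining the Lipschitz estimate with the lower bound on $p$, one gets
\[
\frac{d_\eta(t,x)+\ep|\ln\ep|\,p(t)}{\ep}\;\geq\;\bigl(K-e^{\beta\tau}-\delta_0\bigr)|\ln\ep|-C_0.
\]
Choosing $K>1$ large enough so that $K-e^{\beta\tau}-\delta_0>1$, this quantity tends to $+\infty$ as $\ep\to 0^+$. The exponential decay in Lemma~\ref{LE-bistable}~$(ii)$ then yields $U_\eta(z)\leq -\eta+Me^{-\mu z}$ for $z$ large positive, hence $u_\eta^-(t,x)\leq -\eta+o(1)-q(t)<0$ for $\ep$ small. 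Since the initial data $\varphi\geq 0$ and $u\equiv 0$ solves \eqref{eq}, the comparison principle of Proposition~\ref{LE-comparaison} gives $u^\ep(s,x)\geq 0$, and the required ordering follows.

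The main subtlety is simply to verify that a single $K$ works at both ends: the sub-solution condition of Proposition~\ref{lem:sous-sol-motion} is granted for \emph{every} $K>1$ with the other constants $\sigma,\beta,L$ already fixed, so we are free to enlarge $K$ here to meet the second-region requirement. The delay in $u^\ep$ is absorbed painlessly because the extra $\ep\tau$ in the time shift $s$ precisely corresponds to the $\ep\theta\in[-\ep\tau,0]$ in Proposition~\ref{PROP3}, and because the bounds $p(t)\geq K-e^{\beta\tau}$, $q(t)\geq\sigma\beta$ are uniform in $t\in[-\ep\tau,0]$.
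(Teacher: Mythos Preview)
Your proof is correct and follows essentially the same strategy as the paper's: split into a region where generation (Proposition~\ref{PROP3}) controls $u^\ep$ from below by $1-\ep^{\rho_0}$ while the shift $q(t)\geq\sigma\beta$ caps $u_\eta^-$ at $1-\sigma\beta$, and a complementary region where $u_\eta^-\leq 0\leq u^\ep$. The only cosmetic difference is the splitting criterion: the paper partitions according to the sign of the argument $z=\frac{d_\eta(t,x)+\ep|\ln\ep|p(t)}{\ep}$ (so that $U_\eta(z)\leq U_\eta(0)=0$ is immediate when $z\geq 0$, using the normalization in \eqref{eq-phi}), whereas you partition according to whether $d(0,x)\leq -\delta_0\ep|\ln\ep|$ and then push $z\to+\infty$ via the exponential tail of $U_\eta$. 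Your route is marginally longer in the second region but entirely sound; the paper's choice of splitting on $z$ avoids the appeal to Lemma~\ref{LE-bistable}~$(ii)$ and gives $u_\eta^-\leq -q(t)<0$ in one line.
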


\begin{proof} For ease of notation, we drop most of the subscripts $\eta$.
If $(t,x)$ is such that $d(t,x)\geq -\ep |\ln \ep|p(t)$, then the
decrease of the wave $U$ yields $u^-(t,x)\leq 0$, and there is
nothing to prove. Now let us take $(t,x)$, with $-\ep \tau \leq t
\leq 0$ and $d(t,x)\leq -\ep |\ln \ep|p(t)$. From the generation
of interface from below analysis  we know that (see Proposition
\ref{PROP3})
\begin{equation}\label{ca-genere}
d(0,x)\leq -\delta _0 \ep|\ln \ep| \Longrightarrow 1-\ep ^{\rho
_0}\leq u^\ep(\alpha _0 \ep|\ln \ep|+\ep \tau+t,x)\quad\text{ for
} -\ep\tau \leq t \leq 0.
\end{equation}
Writing $d(0,x)=d(t,x)+\mathcal O (t)$ and using the expression
for $p$ in \eqref{def-p}, we get, for $-\ep \tau \leq t \leq 0$,
\begin{eqnarray*}
d(0,x)&\leq& -\ep|\ln \ep|p(t)+  C \ep \tau\\
&\leq & -\ep |\ln \ep|(-e^{\beta \tau}+e^{-\ep L \tau}+K)+C\ep
\tau\\
&\leq& -\delta _0 \ep |\ln \ep|,
\end{eqnarray*}
for $\ep >0$ small enough, if $K$ is chosen sufficiently large. In
view of \eqref{ca-genere} it suffices to show that $u^-(t,x)\leq
1-\ep^{\rho _0}$, which follows from the vertical shift $q$.
Indeed, the expression for $q$ in \eqref{def-q} shows that
$q(t)\geq \sigma \beta$ for $-\ep \tau \leq t \leq 0$, so that
$u^-(t,x)\leq 1 -\sigma \beta \leq 1-\ep ^{\rho _0}$. The lemma is
proved.
\end{proof}

\vskip 5pt

\noindent{\bf Proof of Theorem \ref{THEO-conv} $(i)$.} From
Proposition \ref{lem:sous-sol-motion}, Lemma \ref{lem:initialiser}
and the comparison principle, we infer that
\begin{equation}\label{lower-prop}
u_\eta ^-(t-\alpha _0 \ep|\ln \ep|-\ep \tau,x)\leq u^\ep
(t,x)\quad \text{ for all } t \geq \alpha _0 \ep|\ln \ep| +\ep
\tau,\; x\in \R^N.
\end{equation}
Let us recall that $u_\eta^-$ is defined in \eqref{sub} and that
$U_\eta(-\infty)=1$. Hence, the convergence to 1 in $\Omega _t
^{c^*}$, as expressed in Theorem \ref{THEO-conv} $(i)$, is a
direct consequence of both Lemma \ref{LE-approx-speed} and the
lower estimate \eqref{lower-prop}.\qed

\section{Global in time upper barriers}\label{s:upper-barriers}

The aim of this section is to construct a super-solution in order
to control the propagation of the solution from above. Let
$(U^*,c^*)$ be the monostable travelling wave with the minimal
speed $c^*>0$ (see Lemma \ref{LE-monostable}), namely
\begin{equation*}
\begin{cases}
(U^*)''(z)+c^*(U^*)'(z)+f\left(U^*(z+c^*\tau)\right)-U^*(z)=0,\;\;\forall z\in\R,\\
(U^*)'(z)<0,\;\, \forall z\in \R,\\
 U^*(-\infty)=1\; \text{ and
}\, U^*(\infty)=0.
\end{cases}
\end{equation*}
Then we shall prove the upper estimate on
$u^\ep:[-\ep\tau,\infty)\times \R^N\to [0,1]$ the solution of
\eqref{eq}--\eqref{eq-ID}.

\begin{prop}[Super-solutions]\label{PROP-above} Let the initial data $\varphi$ satisfy Assumption
\ref{ASS-initial}. Denote by $d(0,x)$ the smooth cut-off signed
distance function to $\Gamma _0$ as defined in subsection
\ref{ss:distance} (in particular, $d(0,x)<0$ if and only if $x\in
\Omega _0$). Then there exists $h\in\R$ such that, for all $\ep>0$
small enough,
\begin{equation*}
u^\ep(t,x)\leq
U^*\left(\frac{d(0,x)-c^*t}{\ep}+h\right),\;\;\forall (t,x)\in
[-\ep\tau,\infty)\times\R^N.
\end{equation*}
\end{prop}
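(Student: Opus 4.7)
The plan is to introduce the planar-wave ansatz
\[
\bar u(t,x):=U^*\!\left(\frac{d(0,x)-c^*t}{\ep}+h\right)
\]
with $h\in\R$ a constant to be chosen sufficiently negative, to check that $\bar u$ is a super-solution of \eqref{eq} on $(0,\infty)\times\R^N$, and to verify that $\bar u$ dominates $u^\ep$ on the initial delay interval $[-\ep\tau,0]\times\R^N$. The conclusion will then follow from the comparison principle (Proposition~\ref{LE-comparaison} with $g=f$).

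Setting $\xi:=(d(0,x)-c^*t)/\ep+h$ and using $\bar u(t-\ep\tau,x)=U^*(\xi+c^*\tau)$, direct differentiation combined with the traveling-wave identity~\eqref{eq-TW-mono} yields
\[
\ep\,\mathcal{L}^\ep[\bar u](t,x)=-(U^*)''(\xi)\bigl(|\nabla d(0,x)|^2-1\bigr)-\ep\,\Delta d(0,x)\,(U^*)'(\xi),
\]
where $\mathcal{L}^\ep[u]:=\partial_t u-\ep\Delta u-\ep^{-1}\bigl[f(u(t-\ep\tau,x))-u(t,x)\bigr]$. In the tube $\{|d(0,x)|<d_0\}$ one has $|\nabla d|\equiv 1$, so the first remainder vanishes; convexity of $\Omega_0$ guarantees that the principal curvatures of level sets of $d(0,\cdot)$ are non-negative in a neighborhood of $\Gamma_0$, hence $\Delta d(0,\cdot)\geq 0$ there, and since $(U^*)'\leq 0$ the second remainder is non-negative as well. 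Thus $\mathcal{L}^\ep[\bar u]\geq 0$ in the tube.

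Outside the tube, the derivatives $(U^*)',(U^*)''$ decay exponentially at $|\xi|\to\infty$ (a monostable counterpart of Lemma~\ref{LE-bistable}\,(ii), obtained from the linearizations of~\eqref{eq-TW-mono} at $U^*=0$ and $U^*=1$), and combined with the uniform bounds $|\nabla d|+|\Delta d|\leq C$ from~\eqref{est-dist} both remainder terms become small. The only delicate situation is $\xi\to-\infty$, where linearization at the stable state $U^*=1$ forces $(U^*)''(\xi)<0$: one absorbs this by slightly thickening the ansatz, for instance replacing $\bar u$ by $\bar u+q(t)$ with $q(t)=O(\ep^\alpha)$, $0<\alpha<1$, in the spirit of the barrier construction of Section~\ref{s:motion}. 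The added term contributes $q(t)\bigl(1-f'(U^*(\xi+c^*\tau))\bigr)/\ep\geq c\,q(t)/\ep$ near $U^*=1$ because $f'(1)<1$, which dominates the exponentially small bad contribution; this extra shift can then be absorbed into $h$.

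For the initial ordering, \eqref{cond-ordre-2} reduces the task to checking $v_0(x)\leq U^*(d(0,x)/\ep+c^*\tau+h)$ (the worst case being $\theta=-\ep\tau$). Outside $\overline{\Omega_0}$, $v_0\equiv 0$ and there is nothing to prove. On $\overline{\Omega_0}$, choose $h$ so negative that $U^*(c^*\tau+h)>\tfrac12(1+\|v_0\|_\infty)$, which is possible since $U^*(-\infty)=1>\|v_0\|_\infty$ by Assumption~\ref{ASS-initial}\,(iii). Because $\operatorname{supp}v_0=\overline{\Omega_0}$ forces $v_0=0$ on $\Gamma_0$, uniform continuity of $v_0$ yields $r>0$ such that $v_0(x)<U^*(c^*\tau+h)\leq U^*(d(0,x)/\ep+c^*\tau+h)$ whenever $-r\leq d(0,x)\leq 0$; on $\{d(0,\cdot)\leq-r\}$, monotonicity gives $U^*(d(0,x)/\ep+c^*\tau+h)\geq U^*(-r/\ep+c^*\tau+h)\to 1$ as $\ep\to 0$, eventually exceeding $\|v_0\|_\infty\geq v_0(x)$. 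Applying Proposition~\ref{LE-comparaison} completes the argument.

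The main obstacle is the super-solution inequality in the far field: the cut-off makes $|\nabla d|^2\neq 1$, and the negativity of $(U^*)''$ at $\xi\to-\infty$ obstructs a direct estimate. This is precisely where the hypothesis $\|v_0\|_\infty<1$ (Remark~\ref{UN}) is used — it leaves ``headroom'' to thicken the ansatz by the small positive correction $q(t)$ without spoiling the initial comparison.
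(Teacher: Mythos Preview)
Your approach differs substantially from the paper's, and it contains a genuine gap.

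The paper never inserts the cut-off distance $d(0,\cdot)$ into the wave profile. Instead, for each boundary point $x_0\in\Gamma_0$ with outward normal $n_0$, it uses the truly planar function
\[
u^+(t,x)=U^*\!\left(\frac{(x-x_0)\cdot n_0-c^*t}{\ep}\right),
\]
which is an \emph{exact} solution of \eqref{eq} (so $\mathcal L^\ep[u^+]\equiv 0$: no curvature or cut-off remainder at all). Convexity of $\Omega_0$ then does two jobs at once: it makes the initial ordering immediate (if $(x-x_0)\cdot n_0>0$ then $x\notin\overline{\Omega_0}$ and $v_0(x)=0$; otherwise monotonicity of $U^*$ and the choice $\|v_0\|_\infty\le U^*(c^*\tau+h)$ suffice), and it lets the signed distance be recovered as the envelope over $x_0\in\Gamma_0$ of the linear forms $(x-x_0)\cdot n_0$. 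The whole proof is a few lines.

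Your direct ansatz $\bar u(t,x)=U^*\bigl((d(0,x)-c^*t)/\ep+h\bigr)$ fails to be a super-solution for all $t>0$. The claim that ``outside the tube, the derivatives $(U^*)',(U^*)''$ decay exponentially at $|\xi|\to\infty$'' tacitly assumes that $|\xi|$ is large whenever $|d(0,x)|\ge d_0$. This is false: $\xi=(d(0,x)-c^*t)/\ep+h$, so for $t$ near $d_0/c^*$ (an $O(1)$ time, independent of $\ep$) the transition layer of $\bar u$, where $\xi=O(1)$, sits precisely in the region $d(0,x)\ge d_0$ where the cut-off is active and $|\nabla d|^2\neq 1$. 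There your remainder $-(U^*)''(\xi)(|\nabla d|^2-1)$ is $O(1)$, hence $\mathcal L^\ep[\bar u]$ is of order $1/\ep$ with no sign control, and a correction $q(t)=O(\ep^\alpha)$ cannot compensate it. The further assertion that ``this extra shift can then be absorbed into $h$'' is also unfounded: an additive constant $q$ cannot be traded for a spatial translation uniformly in $\xi$, since $U^*(\xi+h')-U^*(\xi)\to 0$ as $\xi\to\pm\infty$. Finally, nothing in the hypotheses on $f$ guarantees that $(U^*)''<0$ only as $\xi\to-\infty$; the sign of $(U^*)''$ in the transition region is not controlled.
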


\begin{proof} Since the function $v_0$ appearing in Assumption \ref{ASS-initial} $(iii)$ satisfies $\Vert
v_0\Vert _\infty<1$, we can choose $h\in\R$ such that
$\|v_0\|_\infty\leq U^*(c^*\tau+h)$. Up to changing $U^*$ by
$U^*(\cdot+h)$, we can assume $h=0$ so that
\begin{equation}\label{choceTW}
\|v_0\|_\infty\leq U^*(c^*\tau).
\end{equation}
Let $x_0\in \partial \Omega _0=\Gamma _0$ be given and denote by
$n_0$ the outward unit normal  vector to $\Gamma _0$ at $x_0$.
Then consider the map $u^+:[-\ep\tau,\infty)\times \R ^N \to \R$
defined by
\begin{equation*}
u^+(t,x):=U^*\left(\frac{(x-x_0).n_0-c^*t}{\ep}\right).
\end{equation*}
Setting $z=\displaystyle\frac{(x-x_0).n_0-c^*t}{\ep}$, we compute
\begin{equation*}
\begin{split}
\mathcal L^\ep  [u^+](t,x)&:=\partial _t u^+(t,x)-\ep \Delta
u^+(t,x)-\frac 1 \ep f\left(u^+(t-\ep \tau,x)\right)+\frac 1 \ep
u^+(t,x)\\&=
-\frac{c^*}{\ep} \left(U^*\right)'(z)-\frac{1}{\ep} \left(U^*\right)''(z)-\frac{1}{\ep}f\left(U^*(z+c^*\tau)\right)+\frac 1 \ep U^*(z)\\
&=0,
\end{split}
\end{equation*}
for all $t>0$, $x\in \R^N$. Let us now prove that
\begin{equation*}
u^\ep(\theta,x)=\varphi\left(\frac \theta \ep,x\right)\leq
U^*\left(\frac{(x-x_0).n_0-c^*\theta}{\ep}\right)=u^+(\theta,x),
\end{equation*}
for all $(\theta,x)\in[-\ep\tau,0]\times \R ^N$. In view od
Assumption \ref{ASS-initial} $(iii)$ and the decrease of $U^*$, it
is sufficient to check that
\begin{equation}\label{etoile}
v_0(x)\leq U^*\left(\frac{(x-x_0).n_0}{\ep}+c^*\tau\right), \;\;
\forall x\in \R ^N.
\end{equation}
When $(x-x_0).n_0\leq 0$, the above inequality follows from
\eqref{choceTW}. When $(x-x_0).n_0>0$, \eqref{cond-support} and
the convexity of $\Omega _0$ implies $v_0(x)=0$ and \eqref{etoile}
is clear. Hence, it follows from the comparison principle that
\begin{equation*}
u^\ep(t,x)\leq
U^*\left(\frac{(x-x_0).n_0-c^*t}{\ep}\right),\;\;\forall (t,x)\in
[-\ep\tau,\infty)\times\R^N,
\end{equation*}
for each $x_0\in\partial\Omega _0$. This completes the proof of
the proposition.
\end{proof}

\begin{rem}\label{UN-bis} If $\|v_0\|_\infty=1$ then, under
assumption \eqref{hyp-suppl} of Remark \ref{UN}, we have $\mathcal
L^\ep [K_0 u^+](t,x)\geq 0$. Also, normalizing the travelling wave
$U^*$ by $1=K_0 U^*(c^*\tau)$ and arguing as above, we see that
$u^\ep(\theta,x)\leq K_0 u^+(\theta,x)$, for all
$(\theta,x)\in[-\ep\tau,0]\times\R ^N$. Hence, the comparison
principle yields
\begin{equation*} u^\ep(t,x)\leq K_0
U^*\left(\frac{(x-x_0).n_0-c^*t}{\ep}\right),\;\;\forall (t,x)\in
[-\ep\tau,\infty)\times\R^N,
\end{equation*}
for each $x_0\in\partial\Omega _0$.
\end{rem}

\vskip 5pt

\noindent{\bf Proof of Theorem \ref{THEO-conv} $(ii)$.} The
convergence to 0 outside $\Omega _t ^{c^*}$, as expressed in
Theorem \ref{THEO-conv} $(ii)$, is a direct consequence of the
control from above provided by Proposition \ref{PROP-above}.\qed

\end{document}